\newcommand{\angler}{\rangle}
\newcommand{\anglel}{\langle}
\newcommand{\rrvert}{\vert}
\newcommand{\llvert}{\vert}
\newcolumntype{d}[1]{D{.}{.}{#1}}
\newtheorem{theorem}{Theorem}
\newtheorem{lemma}[theorem]{Lemma}
\newtheorem{proposition}[theorem]{Proposition}
\newtheorem{corollary}[theorem]{Corollary}
\begin{document}
\begin{frontmatter}

\title{The symplectic geometry of closed equilateral random walks in 3-space}
\runtitle{Symplectic geometry of closed random walks}

\begin{aug}
\author[A]{\fnms{Jason}~\snm{Cantarella}\ead[label=e1]{jason@math.uga.edu}\thanksref{T1}}
\and
\author[B]{\fnms{Clayton}~\snm{Shonkwiler}\corref{}\ead[label=e2]{clayton@math.colostate.edu}\thanksref{T2}}
\runauthor{J. Cantarella and C. Shonkwiler}
\affiliation{University of Georgia and Colorado State University}
\address[A]{Department of Mathematics\\
University of Georgia\\
Athens, Georgia 30602\\
USA\\
\printead{e1}}
\address[B]{Department of Mathematics\\
Colorado State University \\
Campus Delivery 1874 \\
Fort Collins, Colorado 80523 \\
USA\\
\printead{e2}}
\end{aug}
\thankstext{T1}{Supported in part by the Simons Foundation.}
\thankstext{T2}{Supported in part by the Simons Foundation and the UGA
VIGRE II Grant DMS-07-38586.}

%
\received{\smonth{10} \syear{2013}}
%
\revised{\smonth{1} \syear{2015}}

%
\begin{abstract}
A closed equilateral random walk in 3-space is a selection of unit
length vectors giving the steps of the walk conditioned on the
assumption that the sum of the vectors is zero. The sample space of
such walks with $n$ edges is the $(2n-3)$-dimensional Riemannian
manifold of equilateral closed polygons in~$\mathbb{R}^3$. We study closed
random walks using the symplectic geometry of the $(2n-6)$-dimensional
quotient of the manifold of polygons by the action of the rotation
group $\operatorname{SO}(3)$.

The basic objects of study are the moment maps on equilateral random
polygon space given by the lengths of any $(n-3)$-tuple of
nonintersecting diagonals. The Atiyah--Guillemin--Sternberg theorem
shows that the image of such a moment map is a convex polytope in
$(n-3)$-dimensional space, while the Duistermaat--Heckman theorem shows
that the pushforward measure on this polytope is Lebesgue measure on
$\mathbb{R}^{n-3}$.
Together, these theorems allow us to define a measure-preserving set of
``action-angle'' coordinates on the space of closed equilateral
polygons. The new coordinate system allows us to make explicit
computations of exact expectations for total curvature and for some
chord lengths of closed (and confined) equilateral random walks, to
give statistical criteria for sampling algorithms on the space of
polygons and to prove that the probability that a randomly chosen
equilateral hexagon is unknotted is at least $\frac{1}{2}$.

We then use our methods to construct a new Markov chain sampling
algorithm for equilateral closed polygons, with a simple modification
to sample (rooted) confined equilateral closed polygons. We prove
rigorously that our algorithm converges geometrically to the standard
measure on the space of closed random walks, give a theory of error
estimators for Markov chain Monte Carlo integration using our method
and analyze the performance of our method. Our methods also apply to
open random walks in certain types of confinement, and in general to
walks with arbitrary (fixed) edgelengths as well as equilateral walks.
\end{abstract}

%
\begin{keyword}[class=AMS]
\kwd[Primary ]{53D30}
\kwd[; secondary ]{60G50}
\end{keyword}
\begin{keyword}
\kwd{Closed random walk}
\kwd{statistics on Riemannian manifolds}
\kwd{Duistermaat--Heckman theorem}
\kwd{random knot}
\kwd{random polygon}
\kwd{crankshaft algorithm}
\end{keyword}
\end{frontmatter}

\section{\texorpdfstring{Introduction.}{Introduction}}

In this paper, we consider the classical model of a random walk in
$\mathbb{R}^3$---the walker chooses each step uniformly from the unit sphere. Some
of the first results in the theory of these random walks are based on
the observation that if a point is distributed uniformly on the surface
of a sphere in $3$-space and we write its position in terms of the
cylindrical coordinates $z$ and~$\theta$, then $z$ and~$\theta$ are
independent, uniform random variates. This is usually called
Archimedes' theorem, and it is the underlying idea in the work of Lord
Rayleigh \cite{Rayleigh1919do},  Treloar \cite{TF9464200077}
and many others in the theory of random walks, starting at the
beginning of the 20th century. In particular, it means that the vector
of $z$-coordinates of the edges (steps) of a random walk is uniformly
distributed on a hypercube and that the vector of $\theta$-coordinates
of the edges is uniformly distributed on the $n$-torus.

When we condition the walk on closure, it seems that this pleasant
structure disappears: the individual steps in the walk are no longer
independent random variates, and there are no obvious uniformly
distributed random angles or distances in sight. This makes the study
of closed random walks considerably more difficult than the study of
general random walks. The main point of this paper is that the apparent
disappearance of this structure in the case of closed random walks is
only an illusion. In fact, there is a very similar structure on the
space of closed random walks if we are willing to pay the modest price
of identifying walks related by translation and rigid rotation in
$\mathbb{R}^3$. This structure is less obvious, but just as useful.

As it turns out, Archimedes' theorem was generalized in deep and
interesting ways in the later years of the 20th century, being revealed
as a special case of the Duistermaat--Heckman theorem \cite{Duistermaat1982hq}
for toric symplectic manifolds. Further, Kapovich
and Millson \cite{Kapovich1996p2605} and Hausmann and Knutson
\cite{Knutson2iyExxE} revealed a toric symplectic structure on the
quotient of the space of closed equilateral polygons by the action of
the Euclidean group $E(3)$.
Together, these theorems define a structure on closed random walk space
which is remarkably similar to the structure on the space of open
random walks: if we view an $n$-edge closed equilateral walk as the
boundary of a triangulated surface, we will show below that the lengths
of the $n-3$ diagonals of the triangulation are uniformly distributed
on the polytope given by the triangle inequalities and that the $n-3$
dihedral angles at these diagonals of the triangulated surface are
distributed uniformly and independently on the $(n-3)$-torus. This
structure allows us to define a special set of ``action-angle''
coordinates which provide a measure-preserving map from the product of
a convex polytope $P \subset\mathbb{R}^{n-3}$ and the $(n-3)$-torus (again,
with their standard measures) to a full-measure subset of the
Riemannian manifold of closed polygons of fixed edgelengths.

Understanding this picture allows us to make some new explicit
calculations and prove some new theorems about closed equilateral
random walks. For instance, we are able to find an exact formula for
the total curvature of closed equilateral polygons, to prove that the
expected lengths of chords skipping various numbers of edges are equal
to the coordinates of the center of mass of a certain polytope, to
compute these moments explicitly for random walks with small numbers of
edges and to give a simple proof that at least $1/2$ of equilateral
hexagons are unknotted. Further, we will be able to give a unified
theory of several interesting problems about confined random walks, and
to provide some explicit computations of chordlengths for confined
walks. We state upfront that all the methods we use from symplectic
geometry are by now entirely standard; the new contribution of our
paper lies in the application of these powerful tools to geometric probability.

We will then turn to sampling for the second half of our paper. Our
theory immediately suggests a new Markov chain sampling algorithm for
confined and unconfined random walks. We will show that the theory of
hit-and-run sampling on convex polytopes immediately yields a sampling
algorithm which converges at a geometric rate to the usual probability
measure on equilateral closed random walks (or equilateral closed
random walks in confinement). Geometric convergence allows us to apply
standard Markov Chain Monte Carlo theory to give error estimators for
MCMC integration over the space of closed equilateral random walks
(either confined or unconfined). Our sampling algorithm works for any
toric symplectic manifold, so we state the results in general terms. We
do this primarily because various interesting confinement models for
random walks have a natural toric symplectic structure, though our
results are presumably applicable far outside the theory of random
walks. As with the tools we use from symplectic geometry, hit-and-run
sampling and MCMC error estimators are entirely standard ways to
integrate over convex polytopes. Again, our main contribution is to
show that these powerful tools apply to closed and confined random
walks with fixed edgelengths and to lay out some initial results which
follow from their use.

\section{\texorpdfstring{Toric symplectic manifolds and action-angle coordinates.}{Toric symplectic manifolds and action-angle coordinates}}
\label{secmomentpolytopesampling}

We begin with a capsule summary of some relevant ideas from symplectic
geometry. A symplectic manifold $M$ is a $2n$-dimensional manifold with
a special nondegenerate $2$-form $\omega$ called the \emph{symplectic
form}. The volume form $  \mathrm{d}m= \frac{1}{n!}\omega^n$ on $M$
is called
the \emph{symplectic volume} or \emph{Liouville volume} and the
corresponding measure is called \emph{symplectic measure}. A
diffeomorphism of a symplectic manifold which preserves the symplectic
form is called a \emph{symplectomorphism}; it must preserve symplectic
volume as well. A symmetry of the manifold is a 1-parameter group of
symplectomorphisms; differentiating at the identity yields a vector
field on the manifold giving the velocity of each point as the group
starts to act. For example, rotating the sphere around the $z$-axis
gives a vector field of velocities tangent to the circles of latitude.

\setcounter{footnote}{2}

We can use the 2-form to pair vector fields on $M$ with 1-forms by
contraction: $\vec{v} \mapsto\omega(\vec{v},\cdot)$. We call this
operation $j$. If applying $j$ to the velocity field of a symmetry
yields an exact $1$-form $\mathrm{d}\mu$, the action is called \emph{Hamiltonian}. The primitive $\mu$ of the $1$-form is a function
on~$M$, which must be constant along any integral curve of the velocity
field by construction. This conserved quantity is called the \emph{moment map} of the action $\mu\dvtx M \rightarrow\mathbb{R}$. If $k$ such
symmetries commute,\footnote{Symmetries which do not commute may be
part of the action of a (noncommutative) Lie group on $M$. The moment
map has a different meaning in this case. We will return to this point
later.} they define an action of the torus $T^k$ on $M$. In this case,
the moment map yields a $k$-dimensional vector of conserved quantities,
so the moment map $\mu$ maps $M$ to $\mathbb{R}^k$ (see \cite{CannasdaSilva2001cg}, Part~VIII).

Two powerful theorems apply to the moment maps of Hamiltonian torus
actions. The convexity theorem of Atiyah~\cite{Atiyah1982ih} and
Guillemin--Sternberg~\cite{Guillemin1982gx}
states that the image of the moment map is a convex polytope $P$ in
$\mathbb{R}^k$, which is called the \emph{moment polytope}. Further, the vertices
of the moment polytope are the images under the moment map of the fixed
points of the torus action, allowing one to find the moment polytope in
practice.
Next, if the action is effective, that is, nonidentity elements act nontrivially, the
Duistermaat--Heckman theorem \cite{Duistermaat1982hq}
asserts that the pushforward of symplectic measure to the moment
polytope $P$ is a piecewise polynomial multiple of Lebesgue measure.
If $k$ is half the dimension of~$M$, that is, $k =
n$, the symplectic manifold is called a \emph{toric symplectic
manifold} and the pushforward measure on $P$ is a constant multiple of
Lebesgue measure.

If we can invert the moment map, we can construct a map $\alpha\dvtx P
\times T^n \rightarrow M$ compatible with $\mu$ which parametrizes a
full-measure subset of the $2n$-dimensional manifold $M$ by the $n$
coordinates of points in $P$, which are called the ``action''
variables, and the $n$ angles in~$T^n$, which are called the
corresponding ``angle'' variables. By convention, we call the action
variables $d_i$ and the angle variables $\theta_i$. We have
the following.

\begin{theorem}[(Duistermaat--Heckman \cite{Duistermaat1982hq}, see
Chapter~30 of \cite{CannasdaSilva2001cg})]
\label{theoremmp}
Suppose $M$ is a $2n$-dimensional toric symplectic manifold with moment
polytope $P$, $T^n$ is the $n$-torus ($n$ copies of the circle) and
$\alpha$ inverts the moment map. If we take the standard measure on
the $n$-torus and the uniform (or Lebesgue) measure on $\operatorname
{int}(P)$, then the map $\alpha\dvtx\operatorname{int}(P) \times T^n
\rightarrow M$ parametrizing a full-measure subset of $M$ in
action-angle coordinates is measure-preserving. In particular, if $f
\dvtx M \rightarrow\mathbb{R}$ is any integrable function then
%
\begin{equation}\label{eqduistermaatheckman}
\hspace*{6pt}\int_M f(x) \, \mathrm{d}m= \int_{P \times T^n}
f(d_1,\ldots ,d_n,\theta _1,\ldots,
\theta_n) \,\mathrm{dVol}_{\mathbb{R}^n} \wedge \mathrm{d}
\theta_1 \wedge\cdots \wedge \mathrm{d}\theta_n
\end{equation}
and if $f(d_1,\ldots,d_n,\theta_1,\ldots,\theta_n) = f_d(d_1,\ldots
,d_n) f_\theta(\theta_1,\ldots,\theta_n)$ then
%
\begin{equation}\label{eqduistermaatheckmanproduct}
\quad\int_M f(x) \, \mathrm{d}m= \int_P
f_d(d_1,\ldots,d_n) \operatorname
{dVol}_{\mathbb{R}^n} \int_{T^n} f_\theta(
\theta_1,\ldots,\theta_n) \,\mathrm{d}\theta_1
\wedge\cdots \wedge \mathrm{d}\theta_n.\hspace*{-6pt}
\end{equation}
\end{theorem}

All this seems forbiddingly abstract, so we give a specific example
which will prove important below. The 2-sphere is a symplectic manifold
where the symplectic form $\omega$ is the ordinary area form, and the
symplectic volume and the Riemannian volume are the same. Any
area-preserving map of the sphere to itself is a symplectomorphism, but
we are interested in the action of the circle on the sphere given by
rotation around the $z$-axis. This action is by area-preserving maps,
and hence by symplectomorphisms, and in fact it is Hamiltonian: the $j$
map pairs the velocity field with the differential of the function $\mu
(x,y,z) = z$, which is the moment map.

We can see that the action preserves the fibers of $\mu$, which are
just horizontal circles on the sphere. Since the dimension of the torus
(1) is half the dimension of the sphere (2), the sphere is then a toric
symplectic manifold. The fixed points of~the torus action are the north
and south poles. The images of these points under the moment map are
the values $+1$ and $-1$, so we expect the moment polytope to be the
convex hull of these points: the interval $[-1,1]$. This is indeed the
image of $\mu(x,y,z) = z$. And, as the Duistermaat--Heckman theorem
claims, \emph{the pushforward of Lebesgue measure on the sphere to
this interval is a constant multiple of the Lebesgue measure on the
line}. This, of course, is exactly Archimedes' theorem, but restated in
a very sophisticated form.

In particular, it means that one can sample points on the sphere
uniformly by choosing their $z$ and $\theta$ coordinates independently
from uniform distributions on the interval and the circle. The
Duistermaat--Heckman theorem extends a similar sampling strategy to any
toric symplectic manifold. The best way to view this sampling strategy,
we think, is as a useful technique in the theory of intrinsic
statistics on Riemannian manifolds (cf. \cite{Pennec2006tx}) which
applies to a special class of manifolds. In principle, one can sample
the entirety of any Riemannian manifold by choosing charts for the
manifold explicitly and then sampling appropriate measures on a
randomly chosen chart. Since the charts are maps from balls in
Euclidean space to the manifold, this reduces the problem to sampling a
ball in $\mathbb{R}^n$ with an appropriate measure. Of course, this
point of
view is so general as to be basically useless in practice: you rarely
have explicit charts for a nontrivial manifold, and the resulting
measures on Euclidean space could be very exotic and difficult to
sample accurately.

Action-angle coordinates, however, give a single ``chart'' with a
simple measure to sample: the product of Lebesgue measure on the convex
moment polytope and the uniform measure on the torus. There is a small
price to pay here. We cannot sample \emph{all} of the toric symplectic
manifold this way. The boundary of $P$ corresponds to a sort of
skeleton inside the toric symplectic manifold $M$, and we cannot sample
this skeleton in any very simple way using action-angle coordinates. Of
course, if we are using the Riemannian (or symplectic) volume of $M$ to
define the probability measure, this is a measure zero subset, so it is
irrelevant to theorems in probability. The benefit is that by deleting
this skeleton, we remove most of the topology of $M$, leaving us with
the topologically very simple sample space $P \times T^{n-3}$.

\section{\texorpdfstring{Toric symplectic structure on random walks or polygonal ``arms.''}{Toric symplectic structure on random walks or polygonal ``arms''}}

We now consider the classical space of random walks of fixed step
length in $\mathbb{R}^3$ and show that the arguments underlying the historical
application of Archimedes' theorem (e.g., in Rayleigh \cite{Rayleigh1919do}) can be viewed as arguments about action-angle
coordinates on this space as a toric symplectic manifold. We denote the
space of open ``arm'' polygons with $n$ edges of lengths $\vec{r} =
(r_1, \ldots, r_n)$ in $\mathbb{R}^3$ by $\operatorname{Arm}_3(n;\vec
{r})$. In particular, the space of
equilateral $n$-edge arms (with unit edges) is denoted $\operatorname
{Arm}_3(n;\vec{1})$. If we
consider polygons related by a translation to be equivalent, the space
$\operatorname{Arm}_3(n;\vec{r})$ is a product $S^2(r_1) \times
\cdots\times S^2(r_n)$ of round
2-spheres with radii given by the $r_i$. The standard probability
measure on this space is the product measure on these spheres; this
corresponds to choosing $n$ independent points distributed according to
the uniform measure on $S^2$ to be the edge vectors of the polygon.

\begin{proposition}
\label{propsymplecticarms}
The space of fixed edgelength open polygonal ``arms'' $\operatorname
{Arm}_3(n;\vec{r})$ is the
product of $n$ round spheres of radii $\vec{r} = (r_1, \ldots, r_n)$.
This is a $2n$-dimensional toric symplectic manifold where the
Hamiltonian torus action is given by rotating each sphere about the
$z$-axis, and the symplectic volume is the standard measure. The moment
map $\mu\dvtx\operatorname{Arm}_3(n;\vec{r})\rightarrow\mathbb
{R}^n$ is given by the \mbox{$z$-}coordinate
of each edge vector, and the image of this map (the moment polytope) is
the hyperbox $\prod_{i=1}^n [-r_i,r_i]$. There is a measure-preserving map
\[
\alpha\dvtx \prod_{i=1}^n [-r_i,r_i]
\times T^n \rightarrow\operatorname {Arm}_3(n;\vec{r})
\]
given explicitly by $\vec{e}_i = (\cos\theta_i \sqrt{1 - z_i^2},
\sin\theta_i \sqrt{r_i^2 - z_i^2}, z_i)$.
\end{proposition}
\begin{pf}
As we mentioned above, the moment polytope is the convex hull of the
images of the fixed points of the Hamiltonian torus action. The only
polygonal arms fixed by the torus action are those where every edge is
in the $\pm z$-direction, so the $z$-coordinates of the fixed points
are indeed the vertices of the hyperbox $\prod_{i=1}^n [-r_i,r_i]$ and
the hyperbox
itself is clearly their convex hull. The $z$-coordinates $z_1, \ldots,
z_n$ and rotation angles $\theta_1, \ldots, \theta_n$ are the
action-angle coordinates on $\operatorname{Arm}_3(n;\vec{r})$ and the
fact that $\alpha$ is
measure-preserving is an immediate consequence of Theorem~\ref{theoremmp}.
\end{pf}

Since we can sample $\prod_{i=1}^n [-r_i,r_i]\times T^n$ directly, this
gives a direct
sampling algorithm for (a full-measure subset of) $\operatorname
{Arm}_3(n;\vec{r})$. Of course,
direct sampling of fixed-edgelength arms is straightforward even
without symplectic geometry, but this description of arm space has
additional implications for confinement problems: if we can describe a
confinement model by additional linear constraints on the action
variables, this automatically yields a toric symplectic structure on
the space of confined arms. We give examples in the next two sections,
then in Section~\ref{subsecsymplecticRayleigh} we use this machinery
to provide a symplectic explanation for Rayleigh's formula for the
probability density function (p.d.f.) of the distance between the
endpoints of a random equilateral arm.

\subsection{\texorpdfstring{Slab-confined arms.}{Slab-confined arms}}
\label{subsecmpsforarms}

One system of linear constraints on the action variables of equilateral
arms is the ``slab'' confinement model.

\begin{definition}\label{defslabarm}
Given a polygon $p$ in $\mathbb{R}^3$ with vertices $v_1, \ldots,
v_n$, let
$\operatorname{zWidth}(p)$ be the maximum absolute value of the
difference between
$z$-coordinates of any two vertices. We define the subspace
$\operatorname{SlabArm}
(n,h) \subset\operatorname{Arm}_3(n;\vec{1})$ to be the space of
equilateral (open) space
$n$-gons up to translation which obey the constraint $\operatorname
{zWidth}(p) \leq h$.
\end{definition}

This is a slab constraint model where the endpoints of the walk are
free (one could also have a model where one or both endpoints are on
the walls of the slab). We now rephrase this slab constraint in
action-angle variables.

\begin{proposition}\label{propslabarmpolytope}
A polygon $p$ in $\operatorname{Arm}_3(n;\vec{1})$ given by
$(z_1,\ldots,z_n,\theta_1,\ldots
,\theta_n)$ in action-angle coordinates lies in the space $\operatorname
{SlabArm}(n,h)$ if
and only if the vector $\vec{z} = (z_1,\ldots,z_n)$ of action
variables lies in the parallelotope $P(n,h)$ given by the collection of
inequalities\vspace*{-6pt}
\[
-1 \leq z_i \leq1,   \qquad -h \leq\sum_{k=i}^j
z_k \leq h
\]
for each $1 \leq i \leq j \leq n$. Hence, there is a measure-preserving map
\[
\alpha\dvtx P(n,h) \times T^n \rightarrow\operatorname{SlabArm}(n,h)
\]
given by restricting the action-angle map of Proposition~\ref{propsymplecticarms}.
\end{proposition}

\begin{pf}
This follows directly from Definition~\ref{defslabarm}: $\sum_{k=i}^j z_k$ is the difference in $z$-height between vertex $i$ and
$j$ so this family of linear constraints encodes $\operatorname
{zWidth}(p) \leq h$.
The other constraints just restate the condition that $\vec{z}$ lies
in the moment polytope $[-1,1]^n$ for $\operatorname{Arm}_3(n;\vec{1})$.
\end{pf}

\begin{corollary}\label{corslabArm}
The probability that $p \in\operatorname{Arm}_3(n;\vec{1})$ lies in
$\operatorname{SlabArm}(n,h)$ is given
by $\operatorname{Vol}P(n,h)/2^n$.
\end{corollary}

This probability function should be useful in computing the entropic
force exerted by an ideal polymer on the walls of a confining slab.
Figure~\ref{figthreeedgeslabpolytopes} shows a collection of these
moment polytopes for different slab widths, and the corresponding volumes.

\begin{figure}

\includegraphics{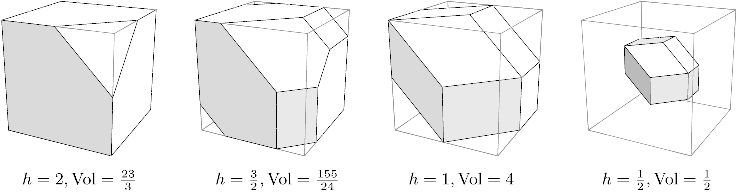}

\caption{This figure shows the moment polytopes corresponding to
$3$-edge arms contained in slabs of width $h$ as subpolytopes of the
cube with vertices $(\pm1, \pm1, \pm1)$, which is the moment
polytope for unconfined arms. In this case, we can compute the volume
of these moment polytopes directly using \emph{polymake}
\cite{Gawrilow2000vl}. We conclude, for instance, that the
probability that
a random $3$-edge arm is confined in a slab of width $\frac{1}{2}$
is $\frac{1}{16}$.}
\label{figthreeedgeslabpolytopes}
\end{figure}

\subsection{\texorpdfstring{Half-space confined arms.}{Half-space confined arms}}

A similar problem is this: suppose we have a freely jointed chain which
is attached at one end to a plane (which we assume for simplicity is
the $xy$-plane), and must remain in the half-space on one side of the
plane. This models a polymer where one end of the molecule is bound to
a surface (at an unknown site). The moment polytope is
%
\begin{equation}\label{eqhalfspacepolytope}
\quad\mathcal{H}_n = \bigl\{\vec{z} \in[-1,1]^n  |
z_1 \geq0, z_1 + z_2 \geq0, \ldots,
z_1 + \cdots+ z_n \geq0, -1 \leq z_i \leq1
\bigr\}\hspace*{-12pt}
\end{equation}
and the analogue of Proposition~\ref{propslabarmpolytope} holds in
this case.

We can understand this condition on arms in terms of a standard random
walk problem: the $z_i$ are i.i.d. steps in a random walk, each
selected from the uniform distribution on $[-1,1]$, and we are
interested in conditioning on the event that all the partial sums are
in $[0,\infty)$. A good deal is known about this problem: for
instance, Caravenna gives an asymptotic p.d.f. for the end of a random
walk conditioned to stay positive, which is the height of the free end of
the chain above the plane~\cite{Caravenna2005ja}. If we could find an
explicit form for this p.d.f., we could analyze the stretching experiment
where the free end of the polymer is raised to a known height above the
plane using magnetic or optical tweezers (cf. \cite{Strick2000vk}).

We can directly compute the partition function for this problem; this
is the volume of subpolytope~(\ref{eqhalfspacepolytope}) of the
hypercube. This result is also stated in a paper of Bernardi,
Duplantier and Nadeau~\cite{Bernardi2010ws}. The proof is a pleasant
combinatorial argument which is tangential to the rest of the paper, so
we relegate it to Appendix~\ref{sechalfSpaceArms}.

\begin{proposition}\label{666666666666}
The volume of the polytope~(\ref{eqhalfspacepolytope}) is $\frac
{1}{2^n} {2n \choose n} = \frac{(2n-1)!!}{n!}$.
\end{proposition}

\subsection{\texorpdfstring{Distribution of failure to close lengths.}{Distribution of failure to close lengths}}
\label{subsecsymplecticRayleigh}

We now apply the action-angle coordinates to give an alternate formula
for the p.d.f. of end-to-end distance in a random walk in $\mathbb{R}^3$ with
fixed step lengths and show that it is equivalent to Rayleigh's
$\operatorname{sinc}$ integral formula~\cite{Rayleigh1919do}. This p.d.f. is key to
determining the Green's function for closed polygons, which in turn is
fundamental to the Moore--Grosberg~\cite{Moore2005fh} and
Diao--Ernst--Montemayor--Ziegler~\cite{Diao2011ie,Diaowt,Diao2012dza} sampling algorithms and to expected
total curvature calculations~\cite{GrosbergExtra,cgks}. For
mathematicians, we note that this p.d.f. is required in order to estimate
the entropic elastic force exerted by an ideal polymer whose ends are
held at a fixed distance. Such experiments are actually done in
biophysics---Wuite et al.~\cite{Wuite2000uu} (cf. \cite
{Bustamante2003tg}) made one of the first measurements of the
elasticity of DNA by stretching a strand of DNA between a bead held in
a micropipette and a bead held in an optical trap.

We first establish some lemmas.

\begin{lemma}
\label{lemsumpdf}
The p.d.f. of a sum of independent uniform random variates in $[-r_1,r_1]$
to $[-r_n,r_n]$ is given by the pushforward of Lesbegue measure on
$\prod_{i=1}^n [-r_i,r_i]$ to $[-\sum r_i,\sum r_i]$ by the linear
function $\sum x_i$.
This p.d.f. is given by
%
\begin{equation}
f_n(x) = \frac{1}{\prod_{i=1}^n 2 r_i} \frac{1}{\sqrt{n}} \operatorname{SA}
(x,r_1,\ldots,r_n),
\end{equation}
where $\operatorname{SA}(x,r_1,\ldots,r_n)$ is the volume of the slice
of the
hypercube $\prod_{i=1}^n [-r_i,r_i]$ by the plane $\sum x_i = x$. The
function $f_n$ is
everywhere $n-2$ times differentiable for $n > 2$.
\end{lemma}

\begin{pf}
It is standard that $f_n$ is a convolution of the $n$ boxcar functions
giving the p.d.f.s of uniform random variates on the intervals
$[-r_1,r_1], \ldots,\break  [-r_n,r_n]$, and hence that~$f_n$ is $n-2$ times
differentiable. The set of points $(x_1,\ldots,x_n)$ with $\sum x_i =
x$ is the slice of the hypercube with $(n-1)$-dimen\-sional volume
$\operatorname{SA}
(x,r_1,\ldots,r_n)$. This not quite the value of the p.d.f. $f_n(x)$, as
we must correct for the rate at which these slices sweep out
$n$-dimensional volume using the coarea formula and normalize the
result by the volume of the hyperbox~$\prod_{i=1}^n [-r_i,r_i]$.
\end{pf}

We have the following.

\begin{proposition}
\label{propend-to-end}
The p.d.f. of the end-to-end distance $\ell\in[0,\sum r_i]$ over the
space of polygonal arms $\operatorname{Arm}_3(n;\vec{r})$ is given by
\[
\phi_n(\ell) = \frac{\ell}{2^{n-1} R \sqrt{n-1}} \bigl( \operatorname{SA}(\ell
-r_n,r_1,\ldots,r_{n-1}) - \operatorname{SA}(
\ell+r_n,r_1,\ldots ,r_{n-1}) \bigr),
\]
%
where $R = \prod_{i=1}^n r_i$ is the product of the edgelengths and
$\operatorname{SA}
(x,r_1,\ldots,r_{n-1})$ is the volume of the slice of the hyperbox
$\prod_{i=1}^{n-1} [-r_i,r_i]$ by the plane $\sum_{i=1}^{n-1} x_i = x$.
\end{proposition}
\begin{pf}
From our moment polytope picture, we can see immediately that the sum
$z$ of the $z$-coordinates of the edges of a random polygonal arm in
$\operatorname{Arm}_3(n;\vec{r})$ has the p.d.f. of a sum of uniform
random variates in $[-r_1,r_1]
\times\cdots\times[-r_n,r_n]$, or $f_n(z)$ in the notation of
Lemma~\ref{lemsumpdf}. Since this is a projection of the spherically
symmetric distribution of end-to-end displacement in $\mathbb{R}^3$ to the
$z$-axis ($\mathbb{R}^1$), equation~(29) of~\cite{Lord1954wh}
applies,\footnote{Lord's notation can be slightly confusing: in his
formula for $p_3(r)$ in terms of $p_1(r)$, we have to remember that
$p_3(r)$ is not itself a p.d.f. on the line, it is a p.d.f. on $\mathbb
{R}^3$. It
only becomes a p.d.f. on the line when multiplied by the correction factor
$4\pi r^2$ giving the area of the sphere at radius $r$ in $\mathbb{R}^3$.}
and tells us that the p.d.f. of $\ell$ is given by
\[
\phi_n(\ell) = -2 \ell f_n'(\ell).
\]

To differentiate $f_n(\ell)$, we use the following observation (cf.
Buonacore \cite{Buonocore2009fk}):
%
\begin{equation}
f_n(x) = \int_{-r_n}^{r_n}
f_{n-1}(x - y) \frac{1}{2r_n} \, \mathrm {d}y= \frac
{F_{n-1}(x + r_n) - F_{n-1}(x - r_n)}{2 r_n},
\end{equation}
where $F_{n-1}(x)$ is the c.d.f. of a sum of uniform random variates in
$[-r_1,r_1],\break  \ldots, [-r_{n-1},r_{n-1}]$. Differentiating and
substituting in the results of Lemma~\ref{lemsumpdf} yields the
formula above.
\end{pf}
Since we will often be interested in equilateral polygons with
edgelength 1, we observe
the following.

\begin{corollary}
\label{corend-to-end}
The\vspace*{1pt} p.d.f. of the end-to-end distance $\ell\in[0,n]$ over the space of
equilateral arms $\operatorname{Arm}_3(n;\vec{1})$ is given by
%
\begin{equation}\label{eqphiell}
\phi_n(\ell) = \frac{\ell}{2^{n-1} \sqrt{n-1}} \bigl( \operatorname{SA}\bigl(\ell
-1,[-1,1]^{n-1}\bigr) - \operatorname{SA}\bigl(\ell+1,[-1,1]^{n-1}
\bigr) \bigr),\hspace*{-6pt}
\end{equation}
%
where $\operatorname{SA}(x,[-1,1]^{n-1})$ is the volume of the slice
of the standard
hypercube $[-1,1]^{n-1}$ by the plane $\sum_{i=1}^{n-1} x_i = x$.
\end{corollary}

The reader who is familiar with the theory of random walks may find the
above corollary rather curious. As mentioned above, the standard
formula for this p.d.f. as an integral of $\operatorname{sinc}$ functions
was given by
Rayleigh in 1919 and it looks nothing like~(\ref{eqphiell}). The
derivation given by Rayleigh of the $\operatorname{sinc}$ integral
formula has no
obvious connection to polyhedral volumes, but in fact by the time of
Rayleigh's paper a connection between polyhedra and $\operatorname
{sinc}$ integrals
had already been given by George P\'olya in his thesis~\cite
{polyathesis,polya} in 1912. This formula has been rediscovered many
times~\cite{Borwein2001bw,Marichal2006tg}. First, we state the
Rayleigh formula~\cite{Rayleigh1919do,Diaowt} in our notation:
%
\begin{equation}\label{eqrayleighform}
\phi_n(\ell) = \frac{2 \ell}{\pi} \int_0^\infty
y \sin\ell y \operatorname{sinc}^n y \, \mathrm{d}y,
\end{equation}
where $\operatorname{sinc}x = \sin x/x$ as usual. Now P\'olya showed
that the volume
of the central slab of the hypercube $[-1,1]^{n-1}$ given by $-a_0 \leq
\sum x_i \leq a_0$ is given by
%
\begin{equation}\label{eqpolyasincintegral}
\operatorname{Vol}(a_0) = \frac{2^n a_0}{\pi} \int
_0^\infty \operatorname{sinc}a_0 y
\operatorname{sinc} ^{n-1} y \, \mathrm{d}y.
\end{equation}
Our $\operatorname{SA}(x,[-1,1]^{n-1})$ is the $(n-1)$-dimensional
volume of a face
of this slab; since it is this face (and its symmetric copy) which
sweep out $n$-dimensional volume as $a_0$ increases, we can deduce that
\[
\operatorname{SA}\bigl(x,[-1,1]^{n-1}\bigr) = \frac{\sqrt{n-1}}{2}
\operatorname{Vol}'(x),
\]
and we can obtain a formula for $\operatorname{SA}(x,[-1,1]^{n-1})$ by
differentiating~(\ref{eqpolyasincintegral}). After some
simplifications, we get
\[
\operatorname{SA}\bigl(x,[-1,1]^{n-1}\bigr) = \frac{2^{n-1} \sqrt{n-1}}{\pi} \int
_0^\infty \cos(xy) \operatorname{sinc}^{n-1}
y \, \mathrm{d}y.
\]
Using the angle addition formula for $\cos(a+b)$, this implies that
\begin{eqnarray*}
&& \operatorname{SA}\bigl(\ell-1,[-1,1]^{n-1}\bigr) - \operatorname{SA}
\bigl(\ell +1,[-1,1]^{n-1}\bigr)\\
 &&\qquad= \frac
{2^{n-1} \sqrt{n-1}}{\pi} \int
_0^\infty2 \sin y \sin\ell y \operatorname{sinc}
^{n-1} y \, \mathrm{d}y
\\
&&\qquad= \frac{2^{n} \sqrt{n-1}}{\pi} \int_0^\infty y \sin\ell y
\operatorname{sinc} ^{n} y \, \mathrm{d}y.
\end{eqnarray*}
Multiplying by $\frac{\ell}{2^{n-1} \sqrt{n-1}}$ shows that~(\ref
{eqphiell}) and~(\ref{eqrayleighform}) are equivalent formulas for
the p.d.f. $\phi_n$.

Given~(\ref{eqphiell}) and~(\ref{eqrayleighform}), the p.d.f. of the
failure-to-close vector $\vec{\ell} = \sum\vec{e}_i$ with length
$|\vec{\ell} | = \ell$ can be written in the following forms:
%
\begin{eqnarray}
\nonumber
\Phi_n(\vec{\ell} ) &=& \frac{1}{4\pi\ell^2}
\phi_n(\ell )
\\
\label{eqpdffailure-to-closevector}
&=&  \frac{1}{2^{n+1} \pi\ell\sqrt{n-1}} \bigl( \operatorname {SA}\bigl(
\ell -1,[-1,1]^{n-1}\bigr) - \operatorname{SA}\bigl(\ell+1,[-1,1]^{n-1}
\bigr) \bigr)
\\
\nonumber
 & =& \frac{1}{2\pi^2 \ell} \int_0^\infty
y \sin\ell y \operatorname{sinc}^n y \, \mathrm{d}y.
\end{eqnarray}
The latter formula for the p.d.f. appears in Grosberg and Moore~\cite
{Moore2005fh} as equation~(B5). Since Grosberg and Moore then actually
evaluate the integral for the p.d.f. as a finite sum, one immediately
suspects that there is a similar sum form for the slice volume terms
in~(\ref{eqphiell}). In fact, we have several options to choose
from, including using P\'olya's finite sum form to express~(\ref
{eqpolyasincintegral}) and then differentiating the sum formula with
respect to the width of the slab. We instead rely on the following
theorem, which we have translated to the current situation.

\begin{theorem}[(Marichal and Mossinghoff~\cite{Marichal2006tg})]
Suppose that $\vec{w} \in\mathbb{R}^n$ has all nonzero components and
suppose $x$ is any real number. Then the $(n-1)$-dimensional volume of
the intersection of the hyperplane $\anglel  \vec{x}, \vec{w} \angler
= x$ with the hypercube $[-1,1]^n$ is given by
%
\begin{equation}\label{eqmarichalsliceformula}
\operatorname{Vol}= \frac{|\vec{w}|_2}{(n-1)!   \prod w_i} \sum_{A
\subset\{
1,\ldots,n\}}
(-1)^{|A|} \biggl(x + \sum_{i \notin A}
w_i - \sum_{i
\in A} w_i
\biggr)_+^{n-1},
\end{equation}
where $|\vec{w}|_2$ is the usual ($L^2$) norm of the vector $\vec
{w}$, $z_+ = \max(z,0)$ and we use the convention $0^0 = 0$ when
considering the $n=1$ case.
\end{theorem}

For our $\operatorname{SA}(x,[-1,1]^{n-1})$ function, the vector $\vec{w}$ consists of all 1's.
Using the fact that the number of subsets of $\{1,\dots,n\}$ with cardinality $k$
is ${n \choose k}$, we can prove the following proposition.

\begin{proposition}\label{propsaexplicitsum}
The $(n-2)$-dimensional volume $\operatorname{SA}(x,[-1,1]^{n-1})$ is
given by
%
\begin{equation}\label{eqfinitesumformula}
\quad\operatorname{SA}\bigl(x,[-1,1]^{n-1}\bigr) = \frac{\sqrt{n-1}}{(n-2)!} \sum
_{k=0}^{n-1} (-1)^k  \pmatrix{n-1 \cr
k} (x + n-1 - 2k)_+^{n-2}.\hspace*{-6pt}
\end{equation}
\end{proposition}

We can combine this with~(\ref{eqpdffailure-to-closevector}) to
obtain the explicit piecewise polynomial p.d.f. for the failure-to-close
vector (for $n \geq2$):
%
\begin{eqnarray}
\quad\Phi_n(\vec{\ell})&=& \frac{n-1}{2^{n+1}\pi\ell}
\nonumber
\\[-8pt]
\label{eqftcpdfv2}
\\[-8pt]
\nonumber
&&{}\times\sum
_{k=0}^{n-1}\frac{(-1)^k }{k!(n-k-1)!} \bigl((n+\ell
-2k-2)_+^{n-2}-(n+\ell-2k)_+^{n-2} \bigr).
\end{eqnarray}
When $n=2$, recall that we use the convention $0^0 = 0$. When $n=1$ the
formula does not make sense, but we can easily compute $\Phi_1(\vec
{\ell}) = \frac{1}{4\pi}\delta(1 - \ell)$. This formula for $\Phi
_n(\ell)$ is known classically, and given as (2.181) in Hughes~\cite
{hughes1995random}. The polynomials are precisely those given in (B13)
of Moore and Grosberg~\cite{Moore2005fh}.

%
%

\subsection{\texorpdfstring{The expected total curvature of equilateral polygons.}{The expected total curvature of equilateral polygons}}
\label{subsectotalcurvatureofclosedpolygons}

In Section~\ref{secnumerics}, it will be useful to know exact values
of the expected total curvature of equilateral polygons. Let
$\operatorname{Pol}_3(n;\vec{1})
\subset\operatorname{Arm}_3(n;\vec{1})$ be the subspace of closed
equilateral $n$-gons.
Following the approach of~\cite{GrosbergExtra,cgks}, we can use the
p.d.f. above to find an integral formula for the expected total curvature
of an element of $\operatorname{Pol}_3(n;\vec{1})$:

\begin{theorem}\label{thmexpectedtotalcurvature}
The expected total curvature of an equilateral $n$-gon is
%
\begin{equation}
\label{eqtotalcurvature}
E\bigl(\kappa; \operatorname{Pol}_3(n;\vec{1})\bigr) =
\frac{n}{2C_n}\int_0^2 \arccos \biggl(
\frac
{\ell^2-2}{2} \biggr) \Phi_{n-2}(\ell)\ell\,\mathrm{d}\ell,
\end{equation}
where $C_n$ and $\Phi_{n-2}(\ell)$ are given explicitly in~(\ref
{eqcn}) and~(\ref{eqftcpdfv2}), respectively, and Table~\ref{tabexpectedtotalcurvature} shows exact values of the integral for
small $n$.
\end{theorem}

%
This integral can be evaluated easily by computer algebra since $\Phi_{n-2}(\ell)$ is piecewise polynomial in $\ell$ and since
${\int_0^2 \arccos (\frac{\ell^2-2}{2}) \ell^k \,\mathrm{d}\ell = \frac{2^{2k+1}n \mathrm{B}({k}/{2}+1,{k}/{2})}{(k+1)^2}}$,
where $\mathrm{B}$ is the Euler beta function.
Of course, it would be very interesting to find a closed\vadjust{\goodbreak} form.

\begin{pf*}{Proof of Theorem~\protect\ref{thmexpectedtotalcurvature}}
The total curvature of a polygon is just the sum of the turning angles,
so the expected total curvature of an $n$-gon is simply $n$ times the
expected value of the turning angle $\theta(\vec{e}_i, \vec
{e}_{i+1})$ between any pair $(\vec{e}_i,\vec{e}_{i+1})$ of
consecutive edges. In other words,
%
\begin{eqnarray}
E\bigl(\kappa; \operatorname{Pol}_3(n;\vec{1})\bigr) &=&
n E\bigl(\theta ;\operatorname{Pol}_3(n;\vec{1})\bigr)
\nonumber
\\[-8pt]
\label{eqexpectedtotalcurvaturegeneral}
\\[-8pt]
\nonumber
&=& n \int
\theta(\vec {e}_i,\vec{e}_{i+1}) P(\vec{e}_i,
\vec{e}_{i+1})\, \mathrm{dVol}_{\vec{e}_i}\,\mathrm{dVol}_{\vec{e}_{i+1}},
\end{eqnarray}
where $P(\vec{e}_i,\vec{e}_{i+1}) \,\mathrm{dVol}_{\vec
{e}_i}\,\mathrm{dVol}_{\vec
{e}_{i+1}}$ is the joint distribution of the pair of edges.

The edges $\vec{e}_i, \vec{e}_{i+1}$ are chosen uniformly from the
unit sphere subject to the constraint that the remaining $n-2$ edges
must connect the head of $\vec{e}_{i+1}$ to the tail of $\vec{e}_i$.
In other words,
\begin{eqnarray*}
&& P(\vec{e}_i, \vec{e}_{i+1}) \,\mathrm{dVol}_{\vec
{e}_i}
\,\mathrm{dVol}_{\vec
{e}_{i+1}} \\
&&\qquad= \frac{1}{C_n} \Phi_1(
\vec{e}_i) \Phi_1(\vec{e}_{i+1})
\Phi_{n-2}(-\vec{e}_i-\vec{e}_{i+1})
\,\mathrm{dVol}_{\vec
{e}_i} \,\mathrm{dVol} _{\vec{e}_{i+1}},
\end{eqnarray*}
where
%
\begin{equation}
\label{eqcn} C_n = \Phi_n(\vec{0}) = \frac{1}{2^{n+1}\pi(n-3)!}
\sum_{k=0}^{\lfloor{n}/{2}\rfloor} (-1)^{k+1} \pmatrix{n \cr k}(n-2k)^{n-3}
\end{equation}
is the normalized $(2n-3)$-dimensional Hausdorff measure of the
submanifold of closed $n$-gons. Notice that $\Phi_1(\vec{v}) = \frac
{\delta(|\vec{v}|-1)}{4\pi}$ is the distribution of a point chosen
uniformly on the unit sphere. In particular, we can rewrite the
integral~(\ref{eqexpectedtotalcurvaturegeneral}) as
\begin{eqnarray*}
&& E\bigl(\kappa; \operatorname{Pol}_3(n;\vec{1})\bigr)\\
&&\qquad =
\frac{n}{C_n} \int_{\vec{e}_i\in S^2} \!\int_{\vec{e}_{i+1} \in S^2}
\theta(\vec{e}_i,\vec{e}_{i+1}) \frac
{1}{16\pi^2}
\Phi_{n-2}(-\vec{e}_i-\vec{e}_{i+1})
\,\mathrm{dVol}_{S^2} \,\mathrm{dVol}_{S^2}.
\end{eqnarray*}
Moreover, at the cost of a constant factor $4\pi$ we can integrate out
the $\vec{e}_i$ coordinate and assume $\vec{e}_i$ points in the
direction of the north pole. Similarly, at the cost of an additional
$2\pi$ factor we can integrate out the azimuth angle of $\vec
{e}_{i+1}$ and reduce the above integral to a single integral over the
polar angle of $\vec{e}_{i+1}$, which is now exactly the angle $\theta
(\vec{e}_i,\vec{e}_{i+1})$:
\[
E\bigl(\kappa; \operatorname{Pol}_3(n;\vec{1})\bigr) =
\frac{n}{2C_n} \int_0^\pi\theta  \Phi
_{n-2}(\sqrt{2-2\cos\theta}) \sin\theta\,\mathrm{d}\theta
\]
since $\sqrt{2-2\cos\theta}$ is the length of the vector $\vec{\ell
} = -\vec{e}_i - \vec{e}_{i+1}$. Changing coordinates to integrate
with respect to $\ell= |\vec{\ell}| \in[0,2]$ completes the proof.
\end{pf*}

\section{\texorpdfstring{The (almost) toric symplectic structure on closed polygons.}{The (almost) toric symplectic structure on closed polygons}}

We are now ready to describe explicitly the toric symplectic structure
on closed polygons of fixed edgelengths. We first need to fix a bit of
notation. The space $\operatorname{Pol}_3(n;\vec{r})$ of closed
polygons of fixed edgelengths
$\vec{r} = (r_1, \ldots, r_n)$, where polygons related by translation
are considered equivalent, is a subspace of the Riemannian manifold
$\operatorname{Arm}_3(n;\vec{r})$ (with the product metric on spheres
of varying radii). It has
a corresponding subspace metric and measure, which we refer to as the
\emph{standard measure} on $\operatorname{Pol}_3(n;\vec{r})$. There
is a measure-preserving
action of $\operatorname{SO}(3)$ on $\operatorname{Pol}_3(n;\vec
{r})$, and a corresponding quotient space
$\widehat{\operatorname{Pol}}_3(n;\vec{r})= \operatorname
{Pol}_3(n;\vec{r})/\operatorname{SO}(3)$. This quotient space
inherits a pushforward
measure from the standard measure on $\operatorname{Pol}_3(n;\vec
{r})$, and we call this the
standard measure on $\widehat{\operatorname{Pol}}_3(n;\vec{r})$,
which we will shortly see (almost) has a
toric symplectic structure.

We can triangulate a convex $n$-gon by joining vertices $v_3, \ldots,
v_{n-1}$ to $v_1$ with $n-3$ chords to create $n-2$ triangles. This
triangulation, which we call the ``fan triangulation,'' is shown in
Figure~\ref{figfantriangulation}. There are many other ways to
triangulate the polygon, but---as can be proved inductively---each
consists of $n-2$ triangles formed by $n-3$ chords.

%
\begin{figure}

\includegraphics{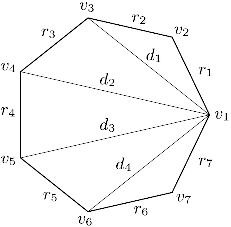}

\caption{The fan triangulation of the regular planar 7-gon.}
\label{figfantriangulation}
\end{figure}

We call these $n-3$ chords the \emph{diagonals} of the triangulation
$T$. Since the side lengths of any triangle obey 3 triangle
inequalities, the edgelengths and diagonal lengths of $T$ must obey a
set of $3(n-2)$ triangle inequalities, which we call the \emph
{triangulation inequalities}. For the fan triangulation, let $r_1,
\ldots, r_n$ be the edgelengths of an $n$-gon and let $d_1, \ldots,
d_{n-3}$ be the lengths of the diagonals. In this triangulation, $d_i =
|v_{i+2} - v_1|$. The first and last triangles are made up of two sides
and one diagonal: $r_1$, $r_2$, and $d_1$, or $r_{n-1}$, $r_n$ and
$d_{n-3}$. So these variables must satisfy the triangle inequalities
%
\begin{equation}\label{eqfirstandlasttriangulationinequality}
\begin{array}{l}
\displaystyle d_1 \leq r_1 +
r_2,
\\[3pt]
\displaystyle r_1 \leq d_1 + r_2,
\\[3pt]
\displaystyle r_2 \leq r_1 + d_1,
\end{array}
 \quad\mbox{and} \quad
\begin{array}{l}
\displaystyle d_{n-3}
\leq r_{n-1} + r_n,
\\[3pt]
\displaystyle r_{n-1} \leq d_{n-3} + r_n,
\\[3pt]
\displaystyle r_n \leq r_{n-1} + d_{n-3}.
\end{array}
\end{equation}
All other triangles are made up of two diagonals and one side: the
triangle $\triangle v_1 v_{i+2} v_{i+3}$ has side lengths $d_i$,
$r_{i+2}$, and $d_{i+1}$. These variables must satisfy the triangle\vspace*{-3pt} inequalities
%
\begin{equation}\label{eqmiddletriangulationinequality}
r_{i+1} \leq d_i + d_{i+1},\qquad
d_i \leq r_{i+2} + d_{i+1},\qquad   d_{i+1}
\leq r_{i+2} + d_i.
\end{equation}

Finally, given a diagonal (chord) of a space polygon, we can perform
what the random polygons community calls a \emph{polygonal fold} or
\emph{crankshaft move}~\cite{Anonymous2010p2603} and the symplectic
geometry community calls a \emph{bending flow}~\cite
{Kapovich1996p2605} by rotating one arc of the polygon rigidly with
respect to the complementary arc, with axis of rotation
the diagonal, as shown in Figure~\ref{figbendingflow}; the collection of such rotations around all of the $n - 3$ diagonals of a given triangulation will be our Hamiltonian torus action.
%
\begin{figure}

\includegraphics{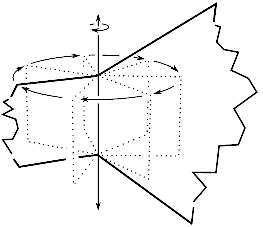}
\vspace*{-3pt}
\caption{In a bending flow or polygonal fold, we use two vertices of
the polygon to define an axis of rotation and rotate one arc of the
polygon (shown at left) around this axis while the complementary arc of
the polygon (shown at right) stays fixed. All edgelengths are fixed by
this transformation and the polygon stays\vspace*{-6pt} closed.}
\label{figbendingflow}
\end{figure}

We can now summarize the existing literature as follows.

\begin{theorem}[(Kapovich and Millson~\cite{Kapovich1996p2605},
Howard, Manon and  Millson~\cite{Howard2008uy}, Hitchin~\cite{Hitchin1987uk})]
\label{thmsymplecticclosedpolygons}
The following facts are known:
\begin{itemize}
\item$\widehat{\operatorname{Pol}}_3(n;\vec{r})$ is a possibly
singular $(2n-6)$-dimensional symplectic
manifold. The symplectic volume is equal to the standard measure.
\item To any triangulation $T$ of the standard $n$-gon we can associate
a Hamiltonian action of the torus $T^{n-3}$ on $\widehat{\operatorname
{Pol}}_3(n;\vec{r})$, where the angle
$\theta_i$ acts by folding the polygon around the $i$th diagonal of
the triangulation.
\item The moment map $\mu\dvtx \widehat{\operatorname{Pol}}_3(n;\vec
{r})\rightarrow\mathbb{R}^{n-3}$ for a
triangulation $T$ records the lengths $d_i$ of the $n-3$ diagonals of
the triangulation.
\item The moment polytope $P$ is defined by the triangulation
inequalities for $T$.
%
\item The action-angle map $\alpha$ for a triangulation $T$ is given by constructing the triangles using
the diagonal and edgelength data to recover their side lengths, and assembling them in space
with (oriented) dihedral angles given by the $\theta_i$, as shown in Figure~\ref{figactionmap}.
\item The inverse image $\mu^{-1}(\operatorname{interior} P) \subset
\widehat{\operatorname{Pol}}_3(n;\vec{r})$ of the interior of the
moment polytope $P$ is an (open) toric
symplectic\vspace*{-3pt} manifold.
\end{itemize}
\end{theorem}

Here is a very brief summary of how these results work. Just as for
%
\begin{figure}[t]

\includegraphics{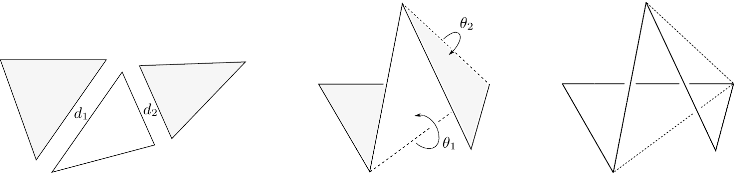}

\caption{This figure shows how to construct an equilateral pentagon in
$\widehat{\operatorname{Pol}}(5;\vec{1})$ using the action-angle
map. First, we pick a point in the moment polytope shown in
Figure~\protect\ref
{figfantriangulationpolytopes} at center. We have now specified
diagonals $d_1$ and $d_2$ of the pentagon, so we may build the three
triangles in the triangulation from their side lengths, as in the
picture at left. We then choose dihedral angles $\theta_1$ and $\theta
_2$ independently and uniformly, and join the triangles along the
diagonals $d_1$ and $d_2$, as in the middle picture. The right-hand
picture shows the final space polygon, which is the boundary of this
triangulated surface.}
\label{figactionmap}
\end{figure}
Hamiltonian torus actions, in general there is a moment map associated
to every Hamiltonian Lie group action on a symplectic manifold. In
particular, Kapovich and Millson~\cite{Kapovich1996p2605} pointed out
that the symplectic manifold $\operatorname{Arm}_3(n;\vec{r})$ admits
a Hamiltonian action by
the Lie group $\operatorname{SO}(3)$ given by rotating the polygonal
arm in space
[this is the diagonal $\operatorname{SO}(3)$ action on the product of
spheres]. In
this case, there are three circle actions given by rotating around the
$x$-, $y$- and $z$-axes, each of which defines a conserved quantity.
But these circle actions do not commute: the three quantities conserved
under each rotation are the coordinate functions of a map $\mu\dvtx
\operatorname{Arm}_3(n;\vec{r})
\rightarrow\mathbb{R}^3$ which is \emph{equi}variant under the
$\operatorname{SO}(3)$
action but not \emph{in}variant. In fact, adapting the computation we
did above in our symplectic explanation of Archimedes' theorem, we can
see that $\mu$ is the displacement vector joining the ends of the polygon.

The closed polygons $\operatorname{Pol}_3(n;\vec{r})$ are the fiber
$\mu^{-1}(\vec{0})$ of
this map. This fiber of $\mu$ is preserved by the $\operatorname
{SO}(3)$ action. In
this situation, we can perform what is known as a \emph{symplectic
reduction} (or Marsden--Weinstein--Meyer reduction \mbox{\cite{Marsden1974tg,Meyer1973wu}}, see Part IX of~\cite
{CannasdaSilva2001cg}) to produce a symplectic structure on the
quotient of the fiber $\mu^{-1}(\vec{0})$ by the group action. This
yields a symplectic structure on the $(2n-6)$-di\-men\-sional moduli space
$\widehat{\operatorname{Pol}}_3(n;\vec{r})$. The symplectic measure
induced by this symplectic structure is
equal to the standard measure given by pushing forward the Hausdorff
measure on $\operatorname{Pol}_3(n;\vec{r})$ to $\widehat
{\operatorname{Pol}}_3(n;\vec{r})$ because the ``parent'' symplectic
manifold $\operatorname{Arm}_3(n;\vec{r})$ is a K\"ahler
manifold~\cite{Hitchin1987uk}.

The polygon space $\widehat{\operatorname{Pol}}_3(n;\vec{r})$ is
singular if
\[
\varepsilon_I(\vec{r}) := \sum_{i\in I}
r_i - \sum_{j \notin I} r_j
\]
is zero for some $I \subset\{1, \ldots, n\}$. Geometrically, this
means it is possible to construct a degenerate polygon which lies on a
line with edgelengths given by $\vec{r}$. Since these polygons are
fixed by rotations around the line on which they lie, the action of
$\operatorname{SO}(3)$ is not free in this case and the symplectic reduction
develops singularities. Nonetheless, the reduction $\widehat
{\operatorname{Pol}}_3(n;\vec{r})$ is a complex
analytic space with isolated singularities; in particular, the
complement of the singularities is a symplectic (in fact K\"ahler)
manifold to which Theorem~\ref{thmsymplecticclosedpolygons} applies.

Both the volume and the cohomology ring of $\widehat{\operatorname
{Pol}}_3(n;\vec{r})$ are well understood
from this symplectic perspective~\cite
{Brion1991hz,Kirwan1992by,Hausmann1998vx,Kamiyama1999tj,Takakura2001ir,Khoi2005ch,Mandini2008wq}.
For example, we have the following.

\begin{proposition}[(Takakura~\cite{Takakura2001ir}, Khoi~\cite
{Khoi2005ch}, Mandini~\cite{Mandini2008wq})]\label{proppolygonspacevolume}
The volume of $\widehat{\operatorname{Pol}}_3(n;\vec{r})$ is
\[
\operatorname{Vol}\bigl(\widehat{\operatorname{Pol}}_3(n;\vec{r})
\bigr) = -\frac{(2\pi)^{n-3}}{2(n-3)!} \sum_I
(-1)^{n-|I|} \varepsilon_I(\vec{r})^{n-3},
\]
where the sum is over all $I \subset\{1, \ldots, n\}$ such that
$\varepsilon_I(\vec{r}) > 0$.
\end{proposition}

\begin{corollary}\label{corequilateralvolume}
The volume of the space of equilateral $n$-gons is
\[
\operatorname{Vol}\bigl(\widehat{\operatorname{Pol}}_3(n;\vec{1})
\bigr) = -\frac{(2\pi)^{n-3}}{2(n-3)!} \sum_{k=0}^{\lfloor
{n}/{2}\rfloor}
(-1)^{k} \pmatrix{n \cr k} (n-2k)^{n-3}.
\]
\end{corollary}


\subsection{\texorpdfstring{The knotting probability for equilateral hexagons.}{The knotting probability for equilateral hexagons}}

We immediately give an example application of this picture. In~\cite
{cgks}, we showed using the F\'ary--Milnor theorem that at least
$\frac{1}{3}$ of hexagons of total length 2 are unknotted by
showing that their total curvature was too small to form a knot. We
could repeat the calculation using our explicit formula for the
expectation of the total curvature for equilateral hexagons above, but
the results would be disappointing; only about $27\%$ of the space is
revealed to be unknotted by this method. On the other hand action-angle
coordinates, coupled with results of Calvo, immediately yield a better bound.

\begin{proposition}
At least $\frac{1}{2}$ of the space $\widehat{\operatorname
{Pol}}_3(6;\vec{1})$ of equilateral
hexagons consists of unknots.
\end{proposition}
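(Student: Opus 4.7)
My approach is to combine the action-angle parametrization of $\widehat{\operatorname{Pol}}_3(6;\vec{1})$ from Theorem~\ref{thm:symplectic closed polygons} with Calvo's geometric analysis of knotted equilateral hexagons. Since the trefoil is the only nontrivial knot type realizable as an equilateral hexagon, it suffices to exhibit a region $K \subset \widehat{\operatorname{Pol}}_3(6;\vec{1})$ of measure at most $\tfrac14$ of the total that contains every knotted hexagon.

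First, I fix the fan triangulation of the hexagon based at $v_1$, so the action variables are the three diagonals $d_1 = |v_1v_3|$, $d_2 = |v_1v_4|$, $d_3 = |v_1v_5|$ and the angles $\theta_1,\theta_2,\theta_3$ are the dihedrals around them.  The triangle inequalities for the four triangles $v_1v_2v_3$, $v_1v_3v_4$, $v_1v_4v_5$, $v_1v_5v_6$ cut out an explicit moment polytope $P \subset \mathbb{R}^3$ (in particular one gets $d_1, d_3 \in [0,2]$, $|d_i - d_{i+1}| \leq 1$, and $d_i + d_{i+1} \geq 1$ for $i=1,2$). A direct piecewise-polynomial integration gives $\operatorname{Vol}(P) = 4$, consistent with Corollary~\ref{cor:equilateral volume}, since $\operatorname{Vol}(\widehat{\operatorname{Pol}}_3(6;\vec{1})) = 32\pi^3 = 4 \cdot (2\pi)^3$. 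By Theorem~\ref{thm:symplectic closed polygons} we have a measure-preserving map $\alpha \co P \times T^3 \to \widehat{\operatorname{Pol}}_3(6;\vec{1})$ (up to measure zero) with Lebesgue measure on $P$ and the standard measure on $T^3$.

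Second, I invoke Calvo's classification of knot types in the moduli space of equilateral hexagons, which characterizes the trefoil region by explicit geometric inequalities on vertex positions (equivalently, on certain chord lengths and on how the two triangulated halves fold across a diagonal). Translating these conditions into our action-angle coordinates produces a subset $K \subset P \times T^3$ that contains the preimage of every knotted hexagon and that is cut out by piecewise linear inequalities in the variables $(d_1,d_2,d_3,\theta_1,\theta_2,\theta_3)$.  Since the measure on $P \times T^3$ is a product, the volume of $K$ can then be computed by integrating the $T^3$-measure of the $\vec\theta$-slice over the appropriate subpolytope of $P$; this is routine piecewise-polynomial integration that could equally well be carried out by hand or by software such as \texttt{polymake}. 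The desired inequality $\operatorname{Vol}(K) \leq \tfrac14 \operatorname{Vol}(P \times T^3) = 8\pi^3$ then yields $\operatorname{Prob}(\text{knotted}) \leq \tfrac14$, and hence the stated lower bound.

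The main obstacle is faithfully translating Calvo's geometric characterization of the trefoil region into the action-angle framework. Calvo's conditions are naturally phrased in terms of explicit vertex positions and chord crossings, whereas the action-angle picture privileges a particular triangulation and its dihedrals; care is needed to re-express his conditions as inequalities in $(d_i,\theta_i)$, and in choosing a triangulation (the fan at $v_1$ may not be optimal, and a triangulation whose diagonals coincide with the chords appearing in Calvo's criterion may make the subsequent volume bound cleanest). Once the translation is in place, the verification that $K$ occupies at most one quarter of $P \times T^3$ is elementary.
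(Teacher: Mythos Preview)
Your outline has the right shape---combine action-angle coordinates with Calvo's analysis of hexagonal trefoils---but as written it is a plan rather than a proof, and the specific choices you make would obstruct the argument rather than help it.

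The paper's proof hinges on two concrete ingredients that you have not identified. First, it uses the triangulation with diagonals $v_1v_3$, $v_3v_5$, $v_1v_5$ (not the fan at $v_1$). Second, the precise fact from Calvo it uses is his Lemma~16: any knotted equilateral hexagon with $\operatorname{curl}=+1$ has \emph{all three dihedral angles} about those diagonals lying in $(0,\pi)$. With this triangulation, that statement translates directly into the single condition $\theta_1,\theta_2,\theta_3\in(0,\pi)$, which involves only the angle variables. By the product form of the Duistermaat--Heckman integral, the probability of this event is simply its measure in $T^3$, namely $(1/2)^3=1/8$; the mirror argument for $\operatorname{curl}=-1$ contributes another $1/8$, and one is done. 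No integration over the moment polytope is needed at all.

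By contrast, with your fan triangulation the dihedrals are about $v_1v_3$, $v_1v_4$, $v_1v_5$, and Calvo's condition does not read off as a box in those $\theta$'s. Your promised ``translation'' of his criterion into fan action-angle coordinates is genuinely nontrivial (the resulting region would mix $d$'s and $\theta$'s in a complicated, nonlinear way), and the subsequent ``routine piecewise-polynomial integration'' is not routine without it. You essentially concede this in your final paragraph. The fix is exactly the one you gesture at: pick the $1$--$3$--$5$ triangulation so that Calvo's dihedrals \emph{are} your angle coordinates; then the bounding set depends only on $\vec\theta$ and the $\tfrac14$ bound is immediate.
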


\begin{pf}
There are several triangulations of the hexagon, but only two have a
central triangle surrounded by 3 others: the triangulations $T_{135}$
given by joining vertices $1$--$3$--$5$ and $T_{246}$ given by joining
vertices $2$--$4$--$6$. Each has a corresponding set of action-angle
coordinates $\alpha\dvtx \mathcal{P} \times T^3 \rightarrow\widehat
{\operatorname{Pol}}_3(6;\vec{1})$.
In \cite{Calvo2001cp}, an impressively detailed analysis of hexagon
space, Jorge Calvo defines a geometric\footnote{Interestingly, curl is
independent from the topological invariant given by the handedness of
the trefoil, so there are at least four different types of equilateral
hexagonal trefoils. Calvo proves that curl and handedness together form
a complete set of invariants for equilateral hexagonal trefoils; that
is, there are only four types.} invariant of hexagons called the curl
which is $0$ for unknots and $\pm1$ for trefoils. In the proof of his
Lemma~16, Calvo observes that any knotted equilateral hexagon with curl
${+}1$ has all three dihedral angles between $0$ and $\pi$ in either
$T_{135}$ or $T_{246}$.

The rest of the proof is elementary, but we give all the steps here as
this is the first of many such arguments below. Formally, the knot
probability is the expected value of the characteristic function
\[
\chi_{\mathrm{knot}}(p) =
\cases{ 1,  & \quad\mbox{if $p$ is knotted,}\vspace*{3pt}
\cr
0, & \quad\mbox{if $p$ is unknotted.}}
\]
By Calvo's work, $\chi_{\mathrm{knot}}$ is bounded above by the sum
${\chi_{\mathrm{curl}=+1}+\chi_{\mathrm{curl}=-1}}$ and $\chi_{\mathrm
{curl}=+1}$ is bounded above by the sum of the characteristic functions
\[
\chi_T(d_1,d_2,d_3,
\theta_1,\theta_2,\theta_3) = %
\cases{ 1, & \quad\mbox{if $\theta_i \in[0,\pi]$ for $i \in\{1, 2, 3\}$,}
\vspace*{3pt}
\cr
0, &\quad \mbox{otherwise,}}
\]
where $T$ is either $T_{135}$ or $T_{246}$. Now Theorem~\ref
{thmsymplecticclosedpolygons} tells us that almost all of $\widehat
{\operatorname{Pol}}_3(6;\vec{1})
$ is a toric symplectic manifold, so (\ref{eqduistermaatheckmanproduct}) of Theorem~\ref{theoremmp} holds for integrals over this
polygon space.\vspace*{1.5pt} In particular, $\chi_T$ does not depend on the $d_i$,
so its expected value over $\widehat{\operatorname{Pol}}_3(6;\vec
{1})$ is equal to its expected value
over the torus $T^3$ of $\theta_i$. This expected value is clearly
$\frac{1}{8}$. Summing over both triangulations and making a
similar argument for $\chi_{\mathrm{curl}=-1}$, we see the knot
probability is no more than $\frac{1}{2}$, as desired.
\end{pf}

\begin{figure}

\includegraphics{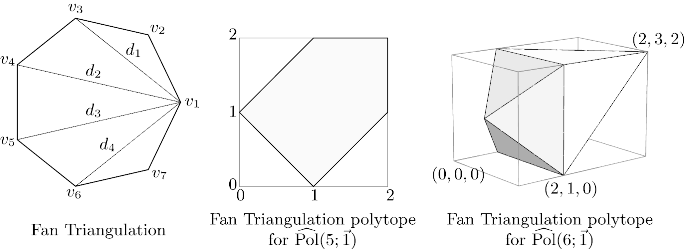}

\caption{This figure shows the fan triangulation of a 7-gon on the
left and the corresponding moment polytopes for equilateral space
pentagons and equilateral space hexagons. For the pentagon moment
polytope, we show the square with corners at $(0,0)$ and $(2,2)$ to
help locate the figure, while for the hexagon moment polytope, we show
the box with corners at $(0,0,0)$ and $(2,3,2)$ to help understand the
geometry of the figure. The vertices of the polytopes correspond to
polygons fixed by the torus action given by rotating around the
diagonals. The polygons on the boundary of the moment polytope all
degenerate in some way, as at least one triangle inequality is
extremized; the vertices of the moment polytope represent especially
degenerate polygons which extremize several triangle inequalities at
once. For instance, the $(2,2)$ point in the pentagon's moment polytope
corresponds to the configuration given by an isoceles triangle with
sides $2$, $2$, and $1$ (two triangles have collapsed to line
segments). The diagonals lie along the long sides; rotating around them
is a rotation of the entire configuration in space, and is hence
trivial because we are considering equivalence classes up to the action
of $\operatorname{SO}(3)$. The $(2,3,2)$ point in the hexagon's moment polytope
corresponds to a completely flat (or ``lined'') configuration
double-covering a line segment of length $3$. Here, all the diagonals
lie along the same line and rotation around the diagonals does nothing.}
\label{figfantriangulationpolytopes}
\end{figure}

Of course, this bound is still a substantial underestimate of the
fraction of unknots. Over a 12-hour run of the ``PTSMCMC'' Markov chain
sampler of Section~\ref{subsecptsmcmc}, we examined 1,318,001
equilateral hexagons and found 173~knots. Using the $95\%$ confidence
level Geyer IPS error estimators of Section~\ref{subsecgeyerips}, we
estimate the knot probability for unconfined equilateral hexagons is \mbox{$1.3 \times10^{-4} \pm0.2 \times10^{-4}$}, or between $1.1$ and
$1.5$ in $10{,}000$.

\subsection{\texorpdfstring{The fan triangulation and chordlengths.}{The fan triangulation and chordlengths}}

As we noted above, the ``fan'' triangulation of a polygon is created by
joining vertex $v_1$ to vertices $v_3, \ldots, v_{n-1}$. Recall that as
shown in Figure~\ref{figfantriangulationpolytopes}, we number the
diagonals $d_1, \ldots, d_{n-3}$ so that the first triangle has
edgelengths $d_1$, $r_1$, $r_2$, the last triangle has edgelengths
$d_{n-3}$, $r_{n-1}$, $r_{n}$, and all the triangles in between have
edgelengths in the form $d_i$, $d_{i+1}$, $r_{i+2}$. The corresponding
triangulation inequalities, which we call the ``fan triangulation
inequalities'' are then
%
\begin{eqnarray}
|r_1 - r_2|  &\leq &  d_1 \leq r_1 +
r_2,\qquad   
%
%
 r_{i+2} \leq d_i + d_{i+1},
\nonumber
\\[-8pt]
\label{eqfanpolytope}
\\[-8pt]
\nonumber
|d_i - d_{i+1}|  &\leq &  r_{i+2}, \qquad
%
%
  |r_n - r_{n-1}| \leq  d_{n-3} \leq
r_n + r_{n-1}.
\end{eqnarray}

\begin{definition}\label{deffanpolytope}
The \emph{fan triangulation polytope} $P_n(\vec{r}) \subset\mathbb{R}
^{n-3}$ is the moment polytope for $\widehat{\operatorname
{Pol}}_3(n;\vec{r})$ corresponding to the fan
triangulation and is determined by the fan triangulation
inequalities~(\ref{eqfanpolytope}). The fan triangulation polytopes
$P_5(\vec{1})$ and $P_6(\vec{1})$ are shown in Figure~\ref{figfantriangulationpolytopes}.
\end{definition}

This description of the moment polytope follows directly from
Theorem~\ref{thmsymplecticclosedpolygons}.




Applying Theorem~\ref{theoremmp} to this situation gives necessary
and sufficient conditions for uniform sampling on $\widehat
{\operatorname{Pol}}_3(n;\vec{r})$. These could
be used to test proposed polygon sampling algorithms given statistical
tests for uniformity on convex subsets of Euclidean space and on the
$(n-3)$-torus.

\begin{proposition}\label{proppolygonsampling}
A polygon in $\widehat{\operatorname{Pol}}_3(n;\vec{r})$ is sampled
according to the standard measure if
and only if its diagonal lengths $d_1 = |v_1 - v_3|$, $d_2 = |v_1 -
v_4|$, \ldots, $d_{n-3} = |v_1 - v_{n-1}|$ are uniformly sampled from
the fan polytope $P_n(\vec{r})$ and its dihedral angles around these
diagonals are sampled independently and uniformly in $[0,2\pi)$.
\end{proposition}

The fan triangulation polytope also gives us a natural way to
understand the probability distribution of chord lengths of a closed
random walk.
To fix notation, we make the following definition.

\begin{definition}
Let $\operatorname{ChordLength}(k,n;\vec{r})$ be the length $|v_1 -
v_{k+1}|$ of the chord skipping\vspace*{1.5pt} the
first $k$ edges in a polygon sampled according to the standard measure
on $\widehat{\operatorname{Pol}}_3(n;\vec{r})$. This is a random variable.
\end{definition}

The expected values of squared chordlengths for equilateral polygons
have been computed by a rearrangement technique, and turn out to be
quite simple.

\begin{proposition}[(Cantarella, Deguchi, Shonkwiler~\cite
{CPACPA21480} and Millett, Zirbel~\cite{Zirbel2012gg})]
\label{propsecondmoment}
The second moment of the random variable $\operatorname
{ChordLength}(k,n;\vec{1})$ is $\frac{k(n-k)}{n-1}$.
\end{proposition}

It is obviously interesting to know the other moments of these random
variables, but this problem seems considerably harder. In particular,
the techniques used in the proofs of~Proposition~\ref{propsecondmoment} do not apply to other moments of chordlength. Here is an
alternate form for the chordlength problem which allows us to make some
explicit calculations.

\begin{theorem}
\label{thmexpectedchordlength}
The expectation of the random variable $\operatorname
{ChordLength}(k,\break n;\vec{1})$ is the coordinate
$d_{k-1}$ of the center of mass of the fan triangulation
polytope~$P_n(\vec{1})$. For $n$ between $4$ and $8$, these
expectations are given by the fractions
%
\begin{equation}
%
\begin{array} {@{}l|c@{\quad}c@{\quad}c@{\quad}c@{\quad}c@{}} 
n \setminus
\raisebox{0.30em}{k\,}
& 2 & 3 & 4 & 5 & 6
\\
\hline
4 & \phantom{000}1\phantom{/000} & & &
\\
5 & \phantom{00}17/15\phantom{0} & \phantom{00}17/15\phantom{0} & &
\\
6 & \phantom{00}14/12\phantom{0} & \phantom{00}15/12\phantom{0} & \phantom{00}14/12\phantom{0} &
\\
7 & \phantom{0}461/385 & \phantom{0}506/385 & \phantom{0}506/385 & \phantom{0}461/385
\\
8 & 1168/960 & 1307/960 & 1344/960 & 1307/960 & 1168/960
\end{array} %
\end{equation}
%
The $p$th moment of $\operatorname{ChordLength}(k,n;\vec{1})$ is
coordinate $d_{k-1}$ of the $p$th
center of mass of the fan triangulation polytope $P_n(\vec{1})$.
\end{theorem}

\begin{pf}
Since the measure on $\widehat{\operatorname{Pol}}_3(n;\vec{1})$ is
invariant under permutations of the
edges, the p.d.f. of chord length for any chord skipping $k$ edges must be
the same as the p.d.f. for the length of the chord joining $v_1$ and
$v_{k+1}$. But this chord is a diagonal of the fan triangulation, so
its length is the coordinate $d_{k-1}$ of the fan triangulation
polytope $P_n(\vec{1})$. Since these chord lengths do not depend on
dihedral angles, their expectations over polygon space are equal to
their expectations over $P_n(\vec{1})$ by (\ref{eqduistermaatheckmanproduct}) of Theorem~\ref{theoremmp}, which applies by
Theorem~\ref{thmsymplecticclosedpolygons}. But the expectation of
the $p$th power of a coordinate over a region is simply a coordinate of
the corresponding $p$th center of mass. We obtained the results in the
table by a direct computer calculation using~\emph{polymake}~\cite
{Gawrilow2000vl}, which decomposes the polytopes into simplices and
computes the center of mass as a weighted sum of simplex centers of mass.
\end{pf}

It would be very interesting to get a general formula for these
polytope centers of mass.
\subsection{\texorpdfstring{Closed polygons in (rooted) spherical confinement.}{Closed polygons in (rooted) spherical confinement}}

Following the terminology of  Diao et al. \cite{Diao2011ie}, we say that a polygon $p$
is in rooted spherical confinement of radius $R$ if every vertex of the
polygon is contained in a sphere of radius $R$ centered at the first
vertex of the polygon. As a subspace of the space of closed polygons of
fixed edgelengths, the space of confined closed polygons inherits a~toric symplectic structure. In fact, the moment polytope for this
structure is a very simple subpolytope of the fan triangulation polytope.

\begin{definition}\label{defconfinedfantriangulationpolytope}
The \emph{confined fan polytope} $P_{n,R}(\vec{r}) \subset P_n(\vec
{r})$ is determined by the fan triangulation inequalities~(\ref{eqfanpolytope}) and the additional linear inequalities $d_i \leq R$.
\end{definition}

As before, we immediately have action-angle coordinates $P_{n,R}(\vec
{r}) \times T^{n-3}$ on the space of rooted confined polygons. We note
that the vertices of the confined fan triangulation polytope
corresponding to a space of confined polygons are \emph{not} all fixed
points of the torus action since this is not the entire moment
polytope; new vertices have been added by imposing the additional
linear inequalities. As before, we get criteria for sampling confined
polygons (directly analogous to Proposition~\ref{proppolygonsampling} for unconfined polygons).

\begin{proposition}
A polygon in $\widehat{\operatorname{Pol}}_3(n;\vec{r})$ is sampled
according to
the standard measure on  polygons in rooted spherical confinement of radius $R$
if and only if its diagonal lengths $d_1 = |v_1 - v_3|$, $d_2 = |v_1 -
v_4|, \ldots, d_{n-3} = |v_1 - v_{n-1}|$ are uniformly sampled from
the confined fan polytope $P_{n,R}(\vec{r})$ and its dihedral angles
around these diagonals are sampled independently and uniformly in
$[0,2\pi)$.
\end{proposition}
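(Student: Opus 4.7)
The plan is to reduce this directly to the unconfined case (Proposition~\ref{prop:polygon sampling}) by observing that rooted spherical confinement translates into \emph{linear} inequalities on the action coordinates of the fan triangulation, so the confined polygon space is literally the preimage of the subpolytope $P_{n,R}(\vec{r})$ under the fan moment map.

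First I would unpack the fan coordinates. By construction the fan triangulation diagonals are $d_i = |v_1 - v_{i+2}|$ for $i = 1, \ldots, n-3$, i.e., each $d_i$ is the distance from the root vertex $v_1$ to one of the non-adjacent vertices $v_3, \ldots, v_{n-1}$. The remaining vertices $v_2$ and $v_n$ sit at distances $r_1$ and $r_n$ from $v_1$, which are fixed by the choice of edge lengths and are assumed to satisfy $r_1, r_n \leq R$ (otherwise the confined space is empty). Hence the condition that \emph{every} vertex of $p$ lies within distance $R$ of $v_1$ is equivalent to the system $d_i \leq R$ for all $i$, which together with the fan triangulation inequalities~\eqref{eq:fan polytope} carves out precisely the confined fan polytope $P_{n,R}(\vec{r}) \subset P_n(\vec{r})$ of Definition~\ref{def:confined fan triangulation polytope}. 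In particular, the space of rooted $R$-confined polygons in $\phr$ is exactly $\mu^{-1}(P_{n,R}(\vec{r}))$, where $\mu$ is the fan triangulation moment map of Theorem~\ref{thm:symplectic closed polygons}.

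Next I would invoke the action-angle machinery. By Theorem~\ref{thm:symplectic closed polygons}, the action-angle map $\alpha \colon \operatorname{int}(P_n(\vec{r})) \times T^{n-3} \to \phr$ parametrizes a full measure subset of $\phr$, and by Theorem~\ref{theorem:mp} it carries the product of Lebesgue measure on $P_n(\vec{r})$ and the standard measure on $T^{n-3}$ to the standard (symplectic) measure on $\phr$. Restricting $\alpha$ to $\operatorname{int}(P_{n,R}(\vec{r})) \times T^{n-3}$ gives a measure-preserving bijection onto a full-measure subset of the confined polygon space; the standard measure on that subspace is, by definition, the restriction of the standard measure on $\phr$, and restriction of a measure-preserving map to the preimage of a measurable set is again measure-preserving. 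Normalizing by $\operatorname{Vol}(P_{n,R}(\vec{r})) \cdot (2\pi)^{n-3}$ turns this into a statement about probability measures.

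Both directions of the ``if and only if'' then follow: uniform sampling of $(d_1, \ldots, d_{n-3})$ on $P_{n,R}(\vec{r})$ together with independent uniform sampling of $(\theta_1, \ldots, \theta_{n-3})$ on $T^{n-3}$ is, via $\alpha$, exactly the standard probability measure on the rooted confined polygon space; conversely, pushing the standard measure through $\mu$ and through the torus action recovers the product of uniform measures. The only genuine content beyond Proposition~\ref{prop:polygon sampling} is the geometric identification of $\{d_i \leq R\}$ with rooted spherical confinement, so I expect no real obstacle here — the proof is a one-line appeal to the fan diagonals being rooted chords, combined with Theorem~\ref{theorem:mp}.
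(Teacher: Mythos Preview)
Your proposal is correct and follows exactly the approach the paper intends: the paper gives no explicit proof here, remarking only that the statement is ``directly analogous to Proposition~\ref{prop:polygon sampling} for unconfined polygons'' after having noted that the confinement constraints $d_i \leq R$ cut out the subpolytope $P_{n,R}(\vec{r})$ and that ``we immediately have action-angle coordinates $P_{n,R}(\vec{r}) \times T^{n-3}$ on the space of rooted confined polygons.'' Your write-up simply makes this analogy explicit, including the useful observation that the adjacent vertices $v_2,v_n$ are automatically at distances $r_1,r_n$ from $v_1$, so the confinement is governed entirely by the fan diagonals.
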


We can also compute expected values for chordlengths for confined
polygons following the lead of Theorem~\ref{thmexpectedchordlength}, but here our results are weaker because the p.d.f. of chordlength
is no longer simply a function of the number of edges skipped.

\begin{theorem}
The expected length of the chord joining vertex $v_1$ to vertex
$v_{k+1}$ in a polygon sampled according to the standard measure on
polygons in rooted spherical confinement of radius $R$ is given by
coordinate $d_{k-1}$ of the center of mass of the confined fan
triangulation polytope $P_{n,R}(\vec{r})$. For $n$ between $4$ and
$10$, $\vec{r} = \vec{1}$, and $R = 3/2$, these expectations are

{\fontsize{8.9pt}{10.9pt}\selectfont{
\[
\begin{array} {@{}l|c@{\hspace*{4pt}}c@{\hspace*{4pt}}c@{\hspace*{4pt}}c@{\hspace*{4pt}}c@{\hspace*{4pt}}c@{\hspace*{4pt}}c@{\hspace*{4pt}}c@{}}
n\setminus\raisebox{0.25em} {k} & 2 & 3 & 4 & 5 & 6 & 7 & 8 &
      (\textrm{denominator})
\\
\hline
\phantom{0}4 & 3/4 & & & & & & &
\\
\phantom{0}5 & 8/9 & 8/9 & & & & & &
\\
\phantom{0}6 & 293/336 & 316/336 & 293/336 & & & & &
\\
\phantom{0}7 & 281/320  & 298/320 & 298/320 & 281/320 & & & &
\\
\phantom{0}8 & 23{,}237 & 24{,}752 & 24{,}402 & 24{,}752 & 23{,}237 & & & 26{,}496
\\
\phantom{0}9 & 46{,}723 & 49{,}718 & 49{,}225 & 49{,}225 & 49{,}718 & 46{,}723 & & 53{,}256
\\
10 & 1{,}145{,}123 & 1{,}218{,}844 & 1{,}205{,}645 & 1{,}210{,}696 & 1{,}205{,}645 &
1{,}218{,}844 & 1{,}145{,}123 & 1{,}305{,}344
\end{array} %
\]}}%
where for $n = 8$, $9$, and $10$ we moved the common denominator of all
fractions in the row to the right-hand column.\vadjust{\goodbreak}
\end{theorem}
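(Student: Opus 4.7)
My plan is to follow exactly the template of the proof of Theorem~\ref{thm:expected chord length}, with the single new input being that the action-angle description transports cleanly from $\phr$ to the subspace of rooted sphere-confined polygons. The essential observation, which I would establish first, is the following: since the diagonal lengths $d_1,\dots,d_{n-3}$ are preserved by the Hamiltonian torus action (they are the moment-map coordinates), the preimage under $\mu$ of any subpolytope of $P_n(\vec{r})$ cut out by inequalities involving only the $d_i$ is a torus-invariant subset of $\phr$. The rooted spherical confinement condition $|v_1 - v_{k+1}| \leq R$ is literally the condition $d_{k-1} \leq R$ on the fan-triangulation diagonals, so the space of rooted sphere-confined polygons is exactly $\mu^{-1}(P_{n,R}(\vec{r}))$. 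The restricted action-angle map
\begin{equation*}
\alpha \co \operatorname{int}(P_{n,R}(\vec{r})) \cross T^{n-3} \longrightarrow \mu^{-1}(P_{n,R}(\vec{r}))
\end{equation*}
therefore remains measure-preserving, inheriting this property from Theorem~\ref{thm:symplectic closed polygons} combined with Theorem~\ref{theorem:mp}.

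With that setup, the proof of the formula for $E[|v_1-v_{k+1}|]$ is immediate. The chord length $|v_1-v_{k+1}|$ is by definition the action variable $d_{k-1}$, so viewed as a function on the confined polygon space it depends only on the action variables and not at all on the dihedral angles. Applying the product form~\eqref{eq:duistermaat heckman product} of the Duistermaat--Heckman theorem, the angle integral contributes a constant factor (the volume of $T^{n-3}$) which cancels against the normalization, and what remains is
\begin{equation*}
E\bigl[|v_1 - v_{k+1}|\bigr] \;=\; \frac{1}{\Vol P_{n,R}(\vec{r})}\int_{P_{n,R}(\vec{r})} d_{k-1}\,\dVol,
\end{equation*}
which is by definition coordinate $d_{k-1}$ of the center of mass of $P_{n,R}(\vec{r})$. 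Note that unlike in Theorem~\ref{thm:expected chord length}, I do \emph{not} need a permutation-symmetry argument to reduce an arbitrary $k$-skipping chord to a fan diagonal: the statement here is already phrased for the specific chord $v_1 v_{k+1}$, which is exactly the $(k-1)$st fan diagonal.

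For the tabulated values, the approach is purely computational: feed the explicit $H$-description of $P_{n,R}(\vec{1})$ given by~\eqref{eq:fan polytope} together with the $n-3$ cap inequalities $d_i \leq R = 3/2$ into \texttt{polymake}, triangulate into simplices, and recover the center of mass as the volume-weighted average of the simplex centroids, precisely as in the unconfined calculation.

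The only nontrivial point, and really the main thing to justify carefully, is the first one: that the inherited structure is what one wants. One must check that $\mu^{-1}(P_{n,R}(\vec{r}))$ carries the subspace measure from $\phr$ and that this agrees with the pushforward of $\dVol_{P_{n,R}(\vec{r})} \wedge \dtheta_1 \wedge\cdots\wedge \dtheta_{n-3}$ under $\alpha$. Because $\alpha$ is measure-preserving on all of $\operatorname{int}(P_n(\vec{r})) \times T^{n-3}$ by Theorem~\ref{thm:symplectic closed polygons}, its restriction to any measurable subset of the source with product form $A \times T^{n-3}$ is measure-preserving onto $\mu^{-1}(A)$; taking $A = \operatorname{int}(P_{n,R}(\vec{r}))$ gives exactly what is needed, and ensures the uniform measure on $P_{n,R}(\vec{r})$ is the correct pushforward. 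After this observation, no further symplectic input is required.
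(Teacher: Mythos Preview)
Your proposal is correct and follows essentially the same approach as the paper, which simply says the proof is the same as that of Theorem~\ref{thm:expected chord length} and uses \texttt{polymake} for the computations. You have in fact been more careful than the paper in one respect: you correctly observe that the permutation-symmetry step from the unconfined proof is neither needed nor available here, since the statement is already phrased for the specific chord $v_1v_{k+1}$ (and confinement breaks permutation invariance anyway).
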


The proof is just the same as the proof of Theorem~\ref{thmexpectedchordlength}, and again we use \emph{polymake} for the computations.
The data show an interesting pattern: for 8, 9 and 10 edge polygons,
the confinement is tight enough that the data reveals small parity
effects in the expectations. For 10-gons, for instance, vertex $v_5$ is
on average closer to vertex $v_1$ than vertex $v_4$ is. We also
calculated the exact expectation of chordlength for equilateral 10-gons
confined to spheres of other radii. The results are shown in
Figure~\ref{figconfinedchordlengths}.

\begin{figure}

\includegraphics{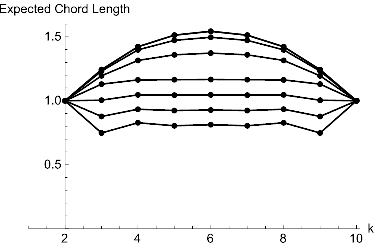}

\caption{Each line in this graph shows the expected length of the
chord joining vertices $v_1$ and $v_k$ in a random equilateral 10-gon.
The 10-gons are sampled from the standard measure on polygons in rooted
spherical confinement. From bottom to top, the confinement radii are
$1.25$, $1.5$, $1.75$, $2$, $2.5$, $3$, $4$ and $5$. Polygons confined
in a sphere of radius $5$ are unconfined. Note the small parity effects
which emerge in tighter confinement. These are exact expectations, not
the result of sampling experiments.}
\label{figconfinedchordlengths}
\end{figure}

\section{\texorpdfstring{Markov chain Monte Carlo for closed and confined random walks.}{Markov chain Monte Carlo for closed and confined random walks}}
\label{secMCMCmethods}

We have now constructed the action-angle coordinates on several spaces
of random walks, including closed walks, closed walks in rooted
spherical confinement, standard (open) random walks and open random
walks confined to half-spaces or slabs. In each case, the action-angle
coordinates have allowed us to prove some theorems and make some
interesting exact computations of probabilities on the spaces. To
address more complicated (and physically interesting) questions, we
will now turn to numerically sampling these spaces.

Numerical sampling of closed polygons underlies a substantial body of
work on the geometry and topology of polymers and biopolymers (see the
surveys of \cite{Orlandini2007kn} and \cite{Benham2005cl}, which
contain more than 200 references), which is a topic of interest in
statistical physics. Many of the physics questions at issue in these
investigations seem to be best addressed by computation. For instance,
while our methods above gave us simple (though not very tight)
theoretical bounds on the fraction of unknots among equilateral 6-gons,
a useful theoretical bound on, say, the fraction of unknots among
1273-gons seems entirely out of reach. On the other hand, it is
entirely reasonable to work on developing well-founded algorithms with
verified convergence and statistically defensible error bars for
experimental work on such questions, and that is precisely our aim in
this part of the paper.

\subsection{\texorpdfstring{Current sampling algorithms for random polygons.}{Current sampling algorithms for random polygons}}

A wide variety of sampling algorithms for random polygons have been
proposed. They fall into two main categories: Markov chain algorithms
such as polygonal folds~\cite{MR95g57016} or crankshaft moves~\cite
{Vologodskii1979ik,Klenin1988dt} (cf. \cite{Anonymous2010p2603} for
a discussion of these methods) and direct sampling methods such as the
``triangle method'' \cite{Moore2004ds} or the ``generalized
hedgehog'' method \cite{Varela2009cda} and the methods of Moore and
Grosberg \cite{Moore2005fh} and Diao, Ernst, Montemayor and Ziegler
\cite{Diao2011ie,Diaowt,Diao2012dza} which are both based on the
``sinc integral formula'' (\ref{eqrayleighform}).

Each of these approaches has some defects. No existing Markov chain
method has been proved to converge to the standard measure, though it
is generally conjectured that they do. It is unclear what measure the
generalized hedgehog method samples, while the triangle method clearly
samples a submanifold\footnote{It is hard to know whether this
restriction is important in practice. The submanifold may be
sufficiently ``well-distributed'' that most integrands of interest
converge anyway. Or perhaps calculations performed with the triangle
method are dramatically wrong for some integrands!} of polygon space.
The Moore--Grosberg algorithm is known to sample the correct
distribution, but faces certain practical problems. It is based on
computing successive piecewise-polynomial distributions for diagonal
lengths of a closed polygon and directly sampling from these
one-dimensional distributions. There is no problem with the convergence
of this method, but the difficulty is that the polynomials are high
degree with large coefficients and many almost-cancellations, leading
to significant numerical problems with accurately evaluating
them.\footnote{Hughes discusses these methods in Section~2.5.4 of his
book on random walks~\cite{hughes1995random}, attributing the formula
rederived by Moore and Grosberg~\cite{Moore2005fh} to a 1946 paper of
Treloar~\cite{TF9464200077}. The problems with evaluating these
polynomials accurately were known by the 1970s, when Barakat~\cite
{0301-0015-6-6-008} derived an alternate expression for this
probability density based on Fourier transform methods.} These problems
are somewhat mitigated by the use of rational and multiple-precision
arithmetic in~\cite{Moore2005fh}, but the number of edges in polygons
sampled with these methods is inherently limited. For instance, the
text file giving the coefficients of the polynomials needed to sample a
random closed 95-gon is over 25 megabytes in length. Diao et al.
avoid this problem by approximating these distributions by normals, but
this approximation means that they are not quite\footnote{Again, it is
unclear what difference this makes in practice.} sampling the standard
measure on polygon space.

\subsection{\texorpdfstring{The toric symplectic Markov Chain Monte Carlo algorithm.}{The toric symplectic Markov Chain Monte Carlo algorithm}}

We introduce a Markov Chain Monte Carlo algorithm for sampling toric
symplectic manifolds with an adjustable parameter $\beta\in(0,1)$
explained below. We will call this the
$\textsc{Toric-Symplectic-MCMC}(\beta)$ algorithm or
$\operatorname{TSMCMC}(\beta)$ for convenience. Though we intend to
apply this
algorithm to our random walk spaces, it works on any toric symplectic
manifold, so we state the results in this section and the next for an
arbitrary $2n$-dimensional toric symplectic manifold $M$ with moment
map $\mu\dvtx M \to\mathbb{R}^n$, moment polytope $P$, and action-angle
parametrization $\alpha\dvtx P \times T^n \to M$. The method is based on
a classical Markov chain for sampling convex regions of $\mathbb{R}^n$ called
the ``hit-and-run'' algorithm: choose a direction at random and sample
along the intersection of that ray with the region to find the next
point in the chain. This method was introduced by Boneh and Golan~\cite
{Boneh1979uy} and independently by Smith~\cite{Smith1980tu} as a
means of generating random points in a high-dimensional polytope. There
is a well-developed theory around this method which we will be able to
make use of below.

Since the action and angle variables are independent, we could resample
the angles every time we take a step in the Markov chain sampling
actions and the chain would certainly converge. However, it might not
be advantageous to do this: it does take some time to update the
angles, and if we are numerically integrating a functional which is
almost constant in the angles (a limiting case would be computing a
function of the chordlengths alone), this update would waste time. For
this reason, our algorithm has a parameter controlling the relative
rate of updates for the action and angle variables, called $\beta$.
At each step of TSMCMC$(\beta)$, with probability $\beta$ we update
the action variables by sampling the moment polytope $P$ using
hit-and-run and with probability $1-\beta$ we update the angle
variables by sampling the torus $T^n$ uniformly. When $\beta=
\frac{1}{2}$ this is analogous to the random scan
Metropolis-within-Gibbs samplers discussed by Roberts and
Rosenthal~\cite{Roberts1997ir} (see also~\cite{Latuszynski2013dz}).

\begin{codebox}\label{tsmcmc}
\Procname{$\textsc{Toric-Symplectic-MCMC}(\vec{p},\vec{\theta},\beta)$}
\zi$\mathit{prob} = \textsc{Uniform-Random-Variate}(0,1)$
\zi\If$\mathit{prob} < \beta$
\zi\Then\> \Comment Generate a new point in $P$ using the
hit-and-run algorithm.
\zi$\vec{v} = \textsc{Random-Direction}(n)$
\zi$(t_0,t_1) = \textsc{Find-Intersection-Endpoints}(P,\vec{p},\vec{v})$
\zi$t = \textsc{Uniform-Random-Variate}(t_0,t_1)$
\zi$\vec{p} = \vec{p} + t \vec{v}$

\zi\Else\> \Comment Generate a new point in $T^n$ uniformly.
\zi\For$\id{ind} = 1$ \To$n$
\zi\Do
$\theta_{\id{ind}} =\textsc{Uniform-Random-Variate}(0,2\pi)$
\End
\End
\zi\Return$(\vec{p},\vec{\theta})$
\end{codebox}

We now prove that the distribution of samples produced by this Markov
chain converges geometrically to the distribution generated by the
symplectic volume on~$M$. First, we show that the symplectic measure on
$M$ is invariant for TSMCMC.

To do so, recall that for any Markov chain $\Phi$ on a state space
$X$, we can define the $m$-step transition probability $\mathcal
{P}^m(x,A)$ to be the probability that an $m$-step run of the chain
starting at $x$ lands in the set $A$. This defines a measure $\mathcal
{P}^m(x,\cdot)$ on~$X$. The transition kernel $\mathcal{P} = \mathcal
{P}^1$ is called \emph{reversible} with respect to a probability
distribution $\pi$ if
%
\begin{equation}
\label{eqreversibility} \int_A \pi(   \mathrm{d}x)
\mathcal{P}(x,B) = \int_B \pi(   \mathrm{d}x) \mathcal
{P}(x,A)\qquad \mbox{for all measurable } A,B \subset X.\hspace*{-15pt}
\end{equation}
In other words, the probability of moving from $A$ to $B$ is the same
as the probability of moving from $B$ to $A$. If $\mathcal{P}$ is
reversible with respect to $\pi$, then $\pi$ is invariant for~$\mathcal{P}$: letting $A = X$ in (\ref{eqreversibility}), we see
that $\pi\mathcal{P} = \pi$.

In TSMCMC$(\beta)$, the transition kernel $\mathcal{P} = \beta
\mathcal{P}_1 + (1-\beta)\mathcal{P}_2$, where $\mathcal{P}_1$ is
the hit-and-run kernel on the moment polytope and $\mathcal{P}_2(\vec
{\theta},\cdot) = \tau$, where $\tau$ is the uniform measure on
$T^n$. Since hit-and-run is reversible on the moment polytope~\cite
{Smith1984vz} and since $\mathcal{P}_2$ is obviously reversible with
respect to $\tau$, we have the following.

\begin{proposition}\label{propreversibility}
\textup{TSMCMC}$(\beta)$ is reversible with respect to the symplectic measure
$\nu$ induced by symplectic volume on $M$. In particular, $\nu$ is
invariant for \textup{TSMCMC}$(\beta)$.
\end{proposition}

Recall that the total variation distance between two measures $\eta_1,
\eta_2$ on a state space~$X$ is given by
\[
\llvert \eta_1 - \eta_2 \rrvert _{\mathrm{TV}} := \sup
_{A \,
\mathrm{any}\,  \mathrm{measurable} \, \mathrm{set}} \bigl|\eta_1(A) - \eta_2(A)\bigr|.
\]
We can now prove geometric convergence of the sample measure generated
by TSMCMC($\beta$) to the symplectic measure in total variation distance.

\begin{theorem}\label{proptsmcmc-convergence}
Suppose that $M$ is a toric symplectic manifold with moment polytope
$P$ and action-angle coordinates $\alpha\dvtx P \times T^n \rightarrow
M$. Further, let $\mathcal{P}^m(\vec{p},\vec{\theta},\cdot)$ be
the $m$-step transition probability of the Markov chain given by
$\textsc{Toric-Symplectic-MCMC}(\beta)$ and let $\nu$ be the symplectic
measure on $M$.

There are constants $R < \infty$ and $\rho< 1$ so that for any $(\vec
{p},\vec{\theta}) \in\operatorname{int}(P) \times T^n$,
\[
\bigl\llvert \alpha_\star\mathcal{P}^m(\vec{p},\vec{
\theta},\cdot) - \nu \bigr\rrvert _{\mathrm{TV}} < R \rho^m.
\]
That is, for any choice of starting point, the pushforward by $\alpha$
of the probability measure generated by $\textsc{Toric-Symplectic-MCMC}(\beta)$ on $P \times T^{n}$ converges
geometrically (in the number of steps taken in the chain) to the
symplectic measure on $M$.
\end{theorem}

\begin{pf}
Let $\lambda$ be Lebesgue measure on the moment polytope $P$ and, as
above, let $\tau$ be uniform measure on the torus $T^n$. By
Theorem~\ref{theoremmp}, it suffices to show that
\[
\bigl\llvert \mathcal{P}^m(\vec{p},\vec{\theta}, \cdot) - \lambda
\times\tau \bigr\rrvert _{\mathrm{TV}} < R \rho^m.
\]
Since the transition kernels $\mathcal{P}_1$ and $\mathcal{P}_2$
commute, for any nonnegative integers $a$ and $b$ and partitions $i_1,
\ldots, i_k$ of $a$ and $j_1, \ldots, j_\ell$ of $b$ we have
%
\begin{eqnarray}
 \bigl(\mathcal{P}_1^{i_1}
\mathcal{P}_2^{j_1} \cdots\mathcal {P}_1^{i_k}
\mathcal{P}_2^{j_\ell} \bigr) (\vec{p},\vec{\theta },\cdot) &=&
\bigl(\mathcal{P}_1^a \mathcal{P}_2^b
\bigr) (\vec {p},\vec{\theta},\cdot)
\nonumber
\\[-8pt]
\label{eqkernelproduct}
\\[-8pt]
\nonumber
&=&  \mathcal{P}_1^a(
\vec{p},\cdot) \times \mathcal{P}_2^b(\vec{\theta},\cdot)
= \mathcal{P}_1^a(\vec {p},\cdot) \times\tau,
\end{eqnarray}
where the last equality follows from the fact that $\mathcal{P}_2(\vec
{\theta},\cdot) = \tau$ for any $\vec{\theta} \in T^n$.

The total variation distance between product measures is bounded above
by the sum of the total variation distances of the factors (this goes
back at least to Blum and Pathak~\cite{Blum1972bt}; see Sendler~\cite
{Sendler1975vn} for a proof), so we have that
%
\begin{eqnarray}
\nonumber
 \bigl\llvert \mathcal{P}_1^a(\vec{p},\cdot)
\times\mathcal {P}_2^b(\vec{\theta},\cdot) - \lambda
\times\tau \bigr\rrvert _{\mathrm{TV}}  &=& \bigl\llvert \mathcal{P}_1^a(
\vec{p},\cdot) \times\tau- \lambda\times\tau \bigr\rrvert _{\mathrm{TV}}
\\
\label{eqkerneltriangleineq} &\leq & \bigl\llvert \mathcal{P}_1^a(\vec
{p}, \cdot) - \lambda \bigr\rrvert _{\mathrm{TV}} + \llvert \tau- \tau \rrvert
_{\mathrm{TV}}
\\
\nonumber
& = & \bigl\llvert \mathcal{P}_1^a(\vec{p},
\cdot) - \lambda \bigr\rrvert _{\mathrm{TV}}.
\end{eqnarray}
Using \cite{Smith1984vz}, Theorem~3, the right-hand side is bounded
above by $ (1-\frac{\xi}{n2^{n-1}} )^{a-1}$ where $\xi$
is the ratio of the volume of $P$ and the volume of the smallest round
ball containing $P$. Let
\[
\kappa:= \biggl(1-\frac{\xi}{n2^{n-1}} \biggr).
\]
Then combining~(\ref{eqkernelproduct}),~(\ref{eqkerneltriangleineq}) and the binomial theorem yields
\begin{eqnarray*}
\bigl\llvert \mathcal{P}^m(\vec{p},\vec{\theta},\cdot) - \lambda
\times\tau \bigr\rrvert _{\mathrm{TV}} & =& \bigl\llvert \bigl(\beta
\mathcal{P}_1 + (1-\beta )\mathcal{P}_2
\bigr)^m(\vec{p},\vec {\theta},\cdot) - \lambda \times\tau \bigr
\rrvert _{\mathrm{TV}}
\\
& = &\Biggl\llvert \sum_{i=0}^m \pmatrix{m
\cr
i} \beta^{m-i}(1-\beta)^i \bigl(\mathcal{P}_1^{m-i}(
\vec{p},\cdot) \times\tau- \lambda \times\tau \bigr) \Biggr\rrvert _{\mathrm{TV}}
\\
& \leq &\sum_{i=0}^m \pmatrix{m
\cr
i}
\beta^{m-i}(1-\beta)^i \kappa ^{m-i-1}
\\
& = &\frac{1}{\kappa}\bigl(1+\beta(\kappa-1)\bigr)^m =
\frac{1}{\kappa} \biggl(1 - \frac{\beta\xi}{n2^{n-1}} \biggr)^m.
\end{eqnarray*}
The ratio $\xi$ of the volume of $P$ and the volume of smallest round
ball containing $P$ is always a positive number with absolute value
less than $1$, and hence $0 < 1 - \beta\xi/n2^{n-1} < 1$. This completes the proof.
\end{pf}

This proposition provides a comforting theoretical guarantee that
$\textsc{Toric}\mbox{-}\break \textsc{Symplectic-MCMC}(\beta)$ will eventually work. The proof
provides a way to estimate the constants $R$ and $\rho$. However, in
practice, these upper bounds are far too large to be useful. Further,
the rate of convergence for any given run will depend on the shape and
dimension of the moment polytope $P$ and on the starting position $x$.
There is quite a bit known about the performance of hit-and-run in
general theoretical terms; we recommend the excellent survey article of
Andersen and Diaconis~\cite{Andersen2007vn}. To give one example, L\`
ovasz and Vempala have shown~\cite{Lovasz2006gs} (see also~\cite
{Lovasz1999cq}) that the number of steps of hit-and-run required to
reduce the total variation distance between $\mathcal{P}^m(x,\cdot)$
and Lebesgue measure by an order of magnitude is proportional\footnote
{The constant of proportionality is large and depends on the geometry
of the polytope, and the amount of time required to reduce the total
variation distance to a fixed amount from the start depends on the
distance from the starting point to the boundary of the polytope.} to
$n^3$ where $n$ is the dimension of the polytope.

\subsection{\texorpdfstring{The Markov Chain CLT and Geyer's IPS error bounds for
TSMCMC integration.}{The Markov Chain CLT and Geyer's IPS error bounds for
TSMCMC integration}}
\label{subsecgeyerips}

We now know that the TSMCMC($\beta)$ algorithm will eventually sample
from the correct probability measure on any toric symplectic manifold,
and in particular from the correct probability distributions on closed
and confined random walks. We should pause to appreciate the
significance of this result for a moment---while many Markov chain
samplers have been proposed for closed polygons, none have been proved
to converge to the correct measure. Further, there has never been a
Markov chain sampler for closed polygons in rooted spherical
confinement (or, as far as we know, for slab-confined or half-space
confined arms).

However, the situation remains in some ways unsatisfactory. If we wish
to compute the probability of an event in one of these probability
spaces of polygons, we must do an integral over the space by collecting
sample values from a Markov chain. But since we do not have any
explicit bounds on the rate of convergence of our Markov chains, we do
not know how long to run the sampler, or how far the resulting sample
mean might be from the integral over the space. To answer these
questions, we need two standard tools: the Markov Chain Central Limit
Theorem and Geyer's Initial Positive Sequence (IPS) error estimators
for MCMC integration~\cite{Geyer1992wm}. For the convenience of
readers unfamiliar with these methods, we summarize the construction
here. Since this is basically standard material, many readers may wish
to skip ahead to the next section.

Combining Proposition~\ref{proptsmcmc-convergence} with~\cite
{Tierney1994uk}, Theorem~5 (which is based on \cite
{Cogburn1972uv}, Corollary~4.2) yields a central limit theorem for the
$\textsc{Toric-Symplectic}\mbox{-}\break \textsc{MCMC}(\beta)$ algorithm. To set notation,
suppose that a run of the TSMCMC$(\beta)$ algorithm produces the
sequence of points $((\vec{p}_0,\vec{\theta}_0),(\vec{p}_1,\vec
{\theta}_1), \ldots)$, where the initial point $(\vec{p}_0,\vec
{\theta}_0)$ is drawn from some initial distribution (e.g., a
delta distribution). For any run $R$, let
\[
\operatorname{SMean}(f;R,m) := \frac{1}{m} \sum
_{k=1}^m f(\vec {p}_k,\vec {
\theta}_k)
\]
be the sample mean of the values of a function $f\dvtx M \to\mathbb
{R}$ over
the first $m$ steps in~$R$.
We will use ``$f$'' interchangeably to refer to the original function
$f\dvtx M \to\mathbb{R}$ or its expression in action-angle
coordinates $f \circ
\alpha\dvtx P \times T^n \to\mathbb{R}$.

Let $E(f;\nu)$ be the expected value of $f$ with respect to the
symplectic measure $\nu$ on $M$. For each $m$ the normalized
sample error $\sqrt{m}(\operatorname{SMean}(f;R,m) - E(f;\nu))$
is a random variable (as it depends on the various random choices in
the run $R$).

\begin{proposition}\label{proptsmcmc-CLT}
Suppose $f$ is a square-integrable real-valued function on the toric
symplectic manifold $M$. Then regardless of the initial distribution,
there exists a real number $\sigma(f)$ so that
%
\begin{equation}
\label{eqtsmcmc-CLT} \sqrt{m}\bigl(\operatorname{SMean}(f;R,m) - E(f;\nu)\bigr)
\stackrel {w} {\longrightarrow} \mathcal{N}\bigl(0,\sigma(f)^2\bigr),
\end{equation}
where $\mathcal{N}(0,\sigma(f)^2)$ is the Gaussian distribution with
mean 0 and standard deviation $\sigma(f)$ and the superscript $w$
denotes weak convergence.
\end{proposition}

Given $\sigma(f)$ and a run $R$, the range $\operatorname
{SMean}(f;R,m) \pm
1.96 \sigma(f)/\sqrt{m}$ is an approximate $95\%$ confidence interval
for the true expected value $E(f;\nu)$. Abstractly, we can find
$\sigma(f)$ as follows.

The variance of the left-hand side of (\ref{eqtsmcmc-CLT}) is
\begin{eqnarray*}
&& m \operatorname{Var}\bigl(\operatorname{SMean}(f;R,m)\bigr) \\
&&\qquad=
\frac{1}{m} \sum_{i=1}^m
\operatorname{Var}\bigl(f(\vec {p}_i,\vec{\theta}_i)
\bigr) + \frac{1}{m} \sum_{i=1}^m
\mathop{\sum_{j=1}}_{j \neq i}^m
\operatorname{Cov} \bigl(f(\vec{p}_i,\vec{\theta}_i),f(
\vec{p}_j,\vec{\theta}_j)\bigr).
\end{eqnarray*}
Since the convergence in Proposition~\ref{proptsmcmc-CLT} is
independent of the initial distribution, $\sigma(f)$ will be the limit
of this quantity for \emph{any} initial distribution. Following Chan
and Geyer~\cite{Chan1994vh}, suppose the initial distribution is the
stationary distribution. In that case, the quantities
\[
\gamma_0(f) := \operatorname{Var}\bigl(f(\vec{p}_i,
\vec{\theta}_i)\bigr)
\]
and
\[
\gamma_k(f) := \operatorname{Cov}\bigl(f(\vec{p}_i,
\vec{\theta }_i),f(\vec {p}_{i+k},\vec{\theta}_{i+k})
\bigr)
\]
(the stationary variance and lag $k$ autocovariance, resp.) are
independent of $i$. Then
\[
\sigma(f)^2 = \lim_{m \to\infty} \Biggl(
\gamma_0(f) + 2 \sum_{k=1}^{m-1}
\frac{m-k}{m}\gamma_k(f) \Biggr) = \gamma_0(f) + 2
\sum_{k=1}^\infty\gamma_k(f)
\]
provided the sum on the right-hand side converges.

In what follows, it will be convenient to write the above as
%
\begin{equation}
\label{eqsigma1} \sigma(f)^2 = \gamma_0(f) + 2
\gamma_1(f) + 2 \sum_{k=1}^\infty
\Gamma_k(f),
\end{equation}
where $\Gamma_k(f) := \gamma_{2k}(f) + \gamma_{2k+1}(f)$. We
emphasize that the quantities $\gamma_0(f),\break   \gamma_k(f),\Gamma
_k(f)$ are associated to the \emph{stationary} Markov chain.

In practice, of course, these quantities, and hence this expression for
$\sigma(f)$ are not computable. After all, if we could sample directly
from the symplectic measure on $M$ there would be no need for TSMCMC.
However, as pointed out by Geyer~\cite{Geyer1992wm}, $\sigma(f)$ can
be estimated from the sample data that produced $\operatorname{SMean}(f;R,m)$.
Specifically, we will estimate the stationary lagged autocovariance
$\gamma_k(f)$ by the following quantity:
%
\begin{eqnarray}
\bar{\gamma}_k(f)& =&  \frac{1}{m} \sum
_{i=1}^{m-k}\bigl[f(\vec{p}_i,\vec {
\theta}_i) - \operatorname{SMean}(f;R,m)\bigr]
\nonumber
\\[-8pt]
\\[-8pt]
\nonumber
&& \hspace*{6pt}\qquad{}\times\bigl[f(
\vec{p}_{i+k},\vec {\theta }_{i+k})-\operatorname{SMean}(f;R,m)
\bigr]. 
\end{eqnarray}
Multiplication by $\frac{1}{m}$ rather than $\frac{1}{m-k}$
is not a typographical error (cf.~\cite{Geyer1992wm}, Section~3.1).
Let $\bar{\Gamma}_k(f) = \bar{\gamma}_{2k}(f) + \bar{\gamma
}_{2k+1}(f)$. Then for any $N > 0$
%
\begin{equation}
\label{eqsigma2} \bar{\sigma}_{m,N}(f)^2 := \bar{
\gamma}_0(f) + 2\bar{\gamma}_1(f) + 2\sum
_{k=1}^N \bar{\Gamma}_k(f)
\end{equation}
is an estimator for $\sigma(f)^2$. We expect the $\bar{\Gamma}_k$ to
decrease to zero as $k \to\infty$ since very distant points in the
run of the Markov chain should become statistically uncorrelated.
Indeed, since TSMCMC is reversible, Geyer shows this is true for the
stationary chain.

\begin{theorem}[(Geyer \cite{Geyer1992wm}, Theorem~3.1)]
\label{propGammak}
$\Gamma_k$ is strictly positive, strictly decreasing and strictly
convex as a function of $k$.
\end{theorem}

We expect, then, that any nonpositivity, nonmonotonicity, or
nonconvexity of the $\bar{\Gamma}_k$ should be due to $k$ being
sufficiently large that $\bar{\Gamma}_k$ is dominated by noise. In
particular, this suggests that a reasonable choice for $N$ in (\ref
{eqsigma2}) is the first $N$ such that $\bar{\Gamma}_N \leq0$, since
the terms past this point will be dominated by noise, and hence tend to
cancel each other.

\begin{definition}\label{defipse}
Given a function $f$ and a length-$m$ run of the TSMCMC algorithm as
above, let $N$ be the largest integer so that $\bar{\Gamma}_1(f),
\ldots, \bar{\Gamma}_N(f)$ are all strictly positive. Then the \emph
{initial positive sequence estimator} for $\sigma(f)$ is
\[
\bar{\sigma}_{m}(f)^2 := \bar{\sigma}_{m,N}(f)^2
= \bar{\gamma }_0(f) + 2\bar{\gamma}_1(f) + 2\sum
_{k=1}^N \bar{\Gamma}_k(f).
\]
\end{definition}

Slightly more refined initial sequence estimators which take into
account the monotonicity and convexity from Proposition~\ref
{propGammak} are also possible; see~\cite{Geyer1992wm} for details.

The pleasant result of all this is that $\bar{\sigma}_{m}$ is a
statistically consistent overestimate of the actual variance.

\begin{theorem}[(Geyer~\cite{Geyer1992wm}, Theorem~3.2)]
For almost all sample paths of TSMCMC,
\[
\liminf_{m \to\infty} \bar{\sigma}_{m}(f)^2
\geq\sigma(f)^2.
\]
\end{theorem}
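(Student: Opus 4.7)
The plan is to reduce the statement to the almost sure convergence of each lagged sample autocovariance $\bar{\gamma}_k(f)$ to the stationary autocovariance $\gamma_k(f)$, and then exploit the strict positivity of $\Gamma_k(f)$ from Theorem~\ref{prop:Gamma_k} to show that the truncation index $N$ in the initial positive sequence estimator grows to infinity.

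\textbf{Step 1 (Markov chain SLLN for lagged autocovariances).} I would first establish that, for each fixed $k \geq 0$ and almost every sample path,
\[
\bar{\gamma}_k(f) \longrightarrow \gamma_k(f) \quad \text{as } m\to\infty.
\]
The geometric convergence in total variation from Proposition~\ref{prop:tsmcmc-convergence}, combined with reversibility (Proposition~\ref{prop:reversibility}), makes the TSMCMC chain Harris ergodic. Applying the Birkhoff ergodic theorem to the $k$-shifted product chain $((\vec{p}_i,\vec{\theta}_i),(\vec{p}_{i+k},\vec{\theta}_{i+k}))$, together with the strong law for $\SampleMean(f;R,m) \to E(f;\nu)$, gives the claimed a.s.\ convergence for each fixed $k$. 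Since a countable intersection of full-measure events has full measure, a single event of full measure secures convergence for all $k$ simultaneously.

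\textbf{Step 2 (The truncation index grows).} From Step~1, $\bar{\Gamma}_k(f) = \bar{\gamma}_{2k}(f) + \bar{\gamma}_{2k+1}(f) \to \Gamma_k(f)$ almost surely. By Theorem~\ref{prop:Gamma_k}, $\Gamma_k(f) > 0$ for every $k \geq 1$. Fix any integer $K \geq 1$; then with probability one there exists $m_0 = m_0(K)$ such that for all $m \geq m_0$ we have $\bar{\Gamma}_k(f) > 0$ for $k = 1, \dots, K$. By the definition of $N$ in Definition~\ref{def:ipse}, this forces $N \geq K$ once $m \geq m_0$, and therefore
\[
\bar{\sigma}_m(f)^2 \;\geq\; \bar{\gamma}_0(f) + 2\bar{\gamma}_1(f) + 2\sum_{k=1}^K \bar{\Gamma}_k(f)
\]
eventually along the sample path.

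\textbf{Step 3 (Liminf and pass $K\to\infty$).} Taking the liminf in $m$ on both sides of the previous inequality and using Step~1 termwise gives
\[
\liminf_{m\to\infty} \bar{\sigma}_m(f)^2 \;\geq\; \gamma_0(f) + 2\gamma_1(f) + 2\sum_{k=1}^K \Gamma_k(f)
\]
almost surely, for every fixed $K$. Since the partial sums on the right are monotone (by positivity of $\Gamma_k$) and converge to $\sigma(f)^2$ by formula~\eqref{eq:sigma1}, letting $K\to\infty$ yields $\liminf_m \bar{\sigma}_m(f)^2 \geq \sigma(f)^2$.

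\textbf{Main obstacle.} The substantive step is Step~1: one must upgrade the total-variation geometric ergodicity of Proposition~\ref{prop:tsmcmc-convergence} to a pathwise strong law for lagged autocovariances, valid from an \emph{arbitrary} starting point rather than from the stationary distribution. The standard route is to invoke Harris recurrence (which follows from geometric ergodicity together with the positivity of the transition kernel inherited from the hit-and-run component) and then apply the Birkhoff/Markov chain SLLN to the shifted product chain; the integrability required for the autocovariances is guaranteed by the square-integrability hypothesis on $f$. Once this convergence is in hand, the remaining combinatorial argument about the truncation index $N$ is straightforward.
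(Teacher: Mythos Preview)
The paper does not supply a proof of this statement at all: it is quoted as Theorem~3.2 of Geyer~\cite{Geyer:1992wm} and used as a black box, so there is no ``paper's own proof'' to compare against. Your proposal is essentially a correct reconstruction of Geyer's original argument: almost-sure convergence of each lagged sample autocovariance (via the Markov chain SLLN, available here because uniform geometric ergodicity gives Harris recurrence), strict positivity of the $\Gamma_k$ forcing the truncation index $N$ to eventually exceed any fixed $K$, and then dropping the (positive) tail terms beyond $K$ and passing $K\to\infty$. The only point worth tightening is the justification of the inequality in Step~2: you should say explicitly that the discarded terms $2\sum_{k=K+1}^{N}\bar{\Gamma}_k(f)$ are nonnegative \emph{by the very definition of $N$}, which is what makes the truncation an underestimate rather than just a different estimate.
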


Therefore, we propose the following procedure for \emph{Toric
Symplectic Markov Chain Monte Carlo integration} which yields
statistically consistent error bars on the estimate of the true value
of the integral.

\begin{tsmcmci*}\label{tsmcmci}
Let $f$ be a square-integrable function on a $2n$-dimensional toric
symplectic manifold $M$ with moment map $\mu\dvtx M \to\mathbb{R}^n$:
\begin{longlist}[8.]
\item[1.] Find the fixed points of the Hamiltonian torus action. The moment
polytope $P$ is the convex hull of the images of these fixed points
under $\mu$.
\item[2.] Convert this vertex description of $P$ to a halfspace
description. In other words, realize $P$ as the subset of points in
$\mathbb{R}
^n$ satisfying a collection of linear inequalities.\footnote{For small
problems, this can be done algorithmically~\cite
{Avis1992kb,Chazelle1993kn,Gawrilow2000vl}. Generally, this will
require an analysis of the moment polytope, such as the one performed
above for the moment polytopes of polygon spaces.}
\item[3.] Pick the parameter $\beta\in(0,1)$. We recommend repeating the
entire procedure for several short runs with various $\beta$ values to
decide on the best $\beta$ for a given application. The final error
estimate is a good measure of how well the chain has converged after a
given amount of runtime.
\item[4.] Pick a point $(\vec{p}_0, \vec{\theta}_0) \in P \times T^n$.
This will be the starting point of the Markov chain. Ideally, $\vec
{p}_0$ should be as far as possible from the boundary of $P$.
\item[5.] Using\vspace*{1pt} $(\vec{p}_0,\vec{\theta}_0)$ as the initial input,
iterate the TSMCMC$(\beta)$ algorithm\vspace*{1pt} for $m$ steps ($m \gg1$). This
produces a finite sequence $((\vec{p}_1,\vec{\theta}_1),\ldots,
(\vec{p}_m,\vec{\theta}_m))$ of points in $P \times T^n$.
\item[6.] Let $\operatorname{SMean}(f;m) = \frac{1}{m} \sum_{i=1}^m
f(\vec{p}_i,
\vec{\theta}_i)$ be the average value of $f$ over the run of points
produced in the previous step.
\item[7.] Compute the initial positive sequence estimator $\bar{\sigma}_{m}(f)^2$.
\item[8.] $\operatorname{SMean}(f;m)\pm1.96\bar{\sigma}_m(f)/\sqrt{m}$
is an
approximate $95\%$ confidence interval for the true expected value of
the function $f$.
\end{longlist}
\end{tsmcmci*}

\subsection{\texorpdfstring{Tuning the TSMCMC algorithm for closed and confined polygons.}{Tuning the TSMCMC algorithm for closed and confined polygons}}
\label{secnumerics}

For polygon sampling, the TSMCMC($\beta$) algorithm has several
adjustable parameters. We must always choose a starting polygon. For
unconfined polygons, we may choose any triangulation of the $n$-gon and
get a corresponding moment polytope. Finally, we must make an
appropriate choice of $\beta$. In this section, we report experimental
results which address these questions. In our experiments, we always
integrated total curvature and used equilateral closed polygons. At
least for unconfined polygons, we know the exact value of the
expectation from Theorem~\ref{thmexpectedtotalcurvature}. To
measure convergence, we used the Geyer IPS error estimate as a measure
of quality (lower is better). Since different step types take very
different amounts of~time to run, we ran different variations of the
algorithm for a consistent amount of CPU time, even though this led to
very different step counts.

We discovered in our experiments that the rate of convergence of
hit-and-run depends strongly on the start point. Our original choice of
start point---the regular planar equilateral $n$-gon---turned out to
be a very poor performer. While it seems like a natural choice
mathematically, the regular $n$-gon is tucked away in a corner of the
moment polytope and it takes hit-and-run quite a while to escape this
trap. After a number of experiments, the most consistently desirable
start point was obtained as follows. First, fold the regular $n$-gon
randomly along the diagonals of the given triangulation. Then,
borrowing an idea from Section~\ref{subsecptsmcmc}, randomly reorder
the resulting edge set (we will see below that this still results in a
closed, equilateral polygon). We used this as a starting configuration
in all of our unconfined experiments.

We also discovered that hit-and-run can converge relatively slowly when
sampling high-dimensional polytopes, leading to very long-range
autocorrelations in the resulting Markov chain. Following a suggestion
of Soteros~\cite{chrispc}, after considerable experimentation we
settled on the convention that a single ``moment polytope'' step in our
implementation of TSMCMC$(\beta)$ would represent ten iterations of
hit-and-run on the moment polytope. This reduced autocorrelations
greatly and led to better convergence overall. We used this convention
for all our numerical experiments below.

The $\textsc{Toric-Symplectic-MCMC}(\beta)$ algorithm depends on a
choice of triangulation $T$ for the $n$-gon to determine the moment
polytope $P$. There is considerable freedom in this choice, since the
number of triangulations of an $n$-gon is the Catalan number ${C_{n-2}
= \frac{1}{n-1} {2n-4 \choose n-2}}$ (\cite{Stanley1999eh}, Exercise
6.19). Using Stirling's approximation, this can be
approximated for large $n$ by ${C_{n-2} \sim4^{n-2}/(n-2)^{3/2} \sqrt
{\pi}}$ (\cite{Olver2010vy}, 26.5.6). We have proved above that the
$\textsc{Toric-Symplectic-MCMC}(\beta)$ algorithm will converge for any
of these triangulations, but the rate of convergence is expected to
depend on the triangulation, which determines the geometry of the
moment polytope. This geometry directly affects the rate of convergence
of hit-and-run; ``long and skinny'' polytopes are harder to sample than
``round'' ones (see Lovasz~\cite{Lovasz1999cq}).

To get a sense of the effect of the triangulation on the performance of
TSMCMC$(\beta)$, we set $\beta=0.5$ and $n = 23$ and ran the
algorithm from 20 start points for 20,000 steps. We then took the
average IPS error bar for expected total curvature over these 20 runs
as a measure of convergence. We repeated this analysis for 300 random
triangulations and 300 repeats of three triangulations that we called
the ``fan,'' ``teeth'' and ``spiral'' triangulations. The results are
shown in Figure~\ref{figtriangulations}. The definition of the fan
and teeth triangulations will be obvious from that figure; the spiral
triangulation is generated by traversing the $n$-gon in order
repeatedly, joining every other vertex along the traversal until the
triangulation is complete. Our experiments showed that this spiral
triangulation was the best performing triangulation among our
candidates, so we standardized on that triangulation for further
numerical experiments.

\begin{figure}

\includegraphics{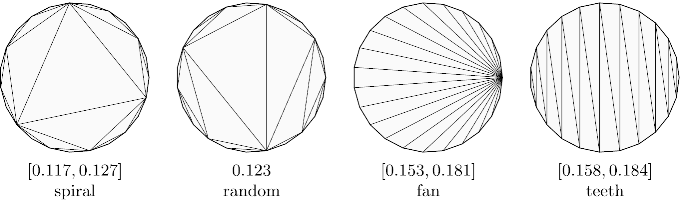}

%
\caption{We tested the average IPS $95\%$ confidence error estimate
for the expected value of total curvature over random equilateral
23-gons over 20 runs of the \textup{TSMCMC}(0.5) algorithm. Each run had a
starting point generated by folding and permuting a regular $n$-gon as
described above, and ran for 20,000 steps. We tried 300 random $n$-gons
and 300 repetitions of the same procedure for the ``spiral,'' ``fan,''
and ``teeth'' triangulations shown above. Below each triangulation is
shown the range of average error bars observed over 300 repetitions of
the 20-start-point trials; for the random triangulation we report the
best average error bar over a single 20-start-point-trial observed for
any of the 300 random triangulations we computed. We can see that the
algorithm based on the spiral triangulation generally outperforms
algorithms based on even the best of the 300 random triangulations,
while algorithms based on the fan and teeth triangulations converged
more slowly.}
\label{figtriangulations}
\end{figure}

We then considered the effect of varying the parameter $\beta$ for the\break 
TSMCMC($\beta$) algorithm using the spiral triangulation. We ran a
series of trials computing expected total curvature for 64-gons over 10
minute runs, while varying $\beta$ from $0.05$ (almost all dihedral
steps) to $0.95$ (almost all moment polytope steps) over 10 minute
runs. We repeated each run 50 times to get a sense of the variability
in the Geyer IPS error estimators for different runs. Since dihedral
steps are considerably faster than moment polytope steps, the step
counts varied from about 1 to 9 million. The resulting Geyer IPS error
estimators are shown in Figure~\ref{figspiralgridone}. Our
recommendation is to use the spiral triangulation and $\beta= 0.5$ for
experiments with unconfined polygons. From the 50 runs using the
recommended $\beta= 0.5$, the run with the median IPS error estimate
produced an expected total curvature estimate of $101.724 \pm0.142$
using about $4.6$ million samples; recall that we computed in
Table~\ref{tabexpectedtotalcurvature} that the expected value of
total curvature for equilateral, unconfined $64$-gons is a complicated
fraction close to $101.7278$.

\begin{figure}

\includegraphics{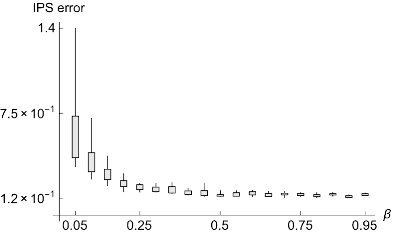}

\caption{The figure above shows a box-and-whisker plot for the IPS
error estimators observed in computing expected total curvature over 50
runs of the \textup{TSMCMC}($\beta$) algorithm for various values of $\beta$.
The boxes show the $\frac{1}{4}$ to $\frac{3}{4}$\vspace*{1pt} quantiles
of the data, while the whiskers extend from the $0.05$ quantile to the
$0.95$ quantile. While the whiskers show that there is plenty of
variability in the data, the general trend is that the performance of
the algorithm improves rapidly as $\beta$ varies from $0.05$ to
$0.25$, modestly as $\beta$ varies from $0.25$ to $0.5$ and is
basically constant for $\beta$ from $0.5$ to $0.95$.}
\label{figspiralgridone}
\end{figure}

\subsection{\texorpdfstring{Crankshafts, folds and permutation steps for unconfined
equilateral polygons.}{Crankshafts, folds and permutation steps for unconfined
equilateral polygons}}
\label{subsecptsmcmc}

It is an old observation that the space of closed equilateral $n$-gons
has an action of the permutation group $S_n$ given by permuting the
edges. For instance, the ``triangle method'' of Moore, Lua and
Grosberg~\cite{Moore2004ds} is based on this idea. Since all edges
are the same length, a reordered polygon is clearly still equilateral.
It is also closed: the end-to-end displacement of the polygon is the
vector sum of the edges, which is invariant under reordering. It seems
desirable,\vspace*{1pt} but not entirely obvious, that this action preserves the
probability measure on $\widehat{\operatorname{Pol}}_3(n;\vec{1})$.

\begin{lemma}
The action of the permutation group $S_n$ on $\widehat{\operatorname
{Pol}}_3(n;\vec{1})$ given by
reordering the edges preserves the standard measure.
\end{lemma}

\begin{pf}
$\!\!$By permuting coordinates, the symmetric group acts on the $n$-fold
product of spheres $\operatorname{Arm}_3(n;\vec{1})= S^2(1) \times
\cdots\times S^2(1)$ by
isometries. This  descends to an action by isometries on the
Riemannian submanifold \mbox{$\operatorname{Pol}_3(n;\vec{1})\subset
\operatorname{Arm}_3(n;\vec{1})\!$} since we have already
seen that $\operatorname{Pol}_3(n;\vec{1})$ is invariant under the
action of $S_n$. Though a
measure-preserving action on a space generally does not preserve
Hausdorff measure on subspaces of lower dimension, the condition that
this action is by isometries is quite strong, and does imply that the
restriction of this action to $\operatorname{Pol}_3(n;\vec{1})$ is
measure-preserving there. It
is then standard that the corresponding action on the quotient\vspace*{1pt} space
$\widehat{\operatorname{Pol}}_3(n;\vec{1})= \operatorname
{Pol}_3(n;\vec{1})/\operatorname{SO}(3)$ is measure-preserving there
because $\widehat{\operatorname{Pol}}_3(n;\vec{1})$
has the pushfoward measure.
\end{pf}

As a consequence, we will see that we can mix permutation steps with
standard TSMCMC steps without losing geometric convergence or the
applicability of the central limit theorem. Such a Markov chain is a
mixture of dihedral angle steps, moment polytope steps, and permutation
steps in some proportion. It is interesting to note that we can recover
algorithms very similar to the standard ``crankshaft'' and ``fold''
Markov chains by allowing no moment polytope steps in the chain.

Since previous authors have observed that adding permutation steps can
significantly speed up convergence in polygon samplers~\cite
{Anonymous2010p2603}, we now experiment to see whether our algorithm,
too, can be improved by mixing in some permutations. More precisely, we
can define a new Markov chain $\textsc{Polygon-Permutation}$ on $\widehat
{\operatorname{Pol}}_3(n;\vec{1})$
by permuting edges at each step:

\begin{codebox}
\Procname{\textsc{Polygon-Permutation}(pol)}
\zi$\sigma= \textsc{Uniform-Permutation}(n)$
\zi pol $ = \textsc{Permute-Edges}(\mathrm{pol},\sigma)$
\zi\Return pol
\end{codebox}

Since the symplectic measure on $\widehat{\operatorname
{Pol}}_3(n;\vec{1})$ is permutation-invariant, the
symplectic measure is stationary for $\textsc{Polygon-Permutation}$.

Now, we can mix TSMCMC$(\beta)$ with \textsc{Polygon-Permutation} to
get the following \textsc{Permutation-Toric-Symplectic-MCMC}$(\beta
,\delta)$ algorithm, where $\delta\in[0,1)$ gives the probability of
doing a permutation\vspace*{1pt} step rather than a TSMCMC$(\beta)$ step. Recall
that ${\alpha\dvtx P \times T^{n-3} \to\widehat{\operatorname
{Pol}}_3(n;\vec{1})}$ is the action-angle
parametrization, where $P$ is the moment polytope induced by the chosen
triangulation.

\begin{codebox}
\Procname{$\textsc{Permutation-Toric-Symplectic-MCMC}(\vec{p},\vec
{\theta},\beta,\delta)$}
\zi$\mathit{prob} = \textsc{Uniform-Random-Variate}(0,1)$
\zi\If$\mathit{prob} < \delta$
\zi\Then$(\vec{p},\vec{\theta} ) = \alpha^{-1}(\textsc
{Polygon-Permutation}(\alpha(\vec{p},\vec{\theta} )))$
\zi\Else$(\vec{p},\vec{\theta} ) = \textsc
{Toric-Symplectic-MCMC}(\vec{p},\vec{\theta},\beta)$
\End
\zi\Return$(\vec{p},\vec{\theta} )$
\end{codebox}

Although \textsc{Polygon-Permutation} is \emph{not} ergodic, the fact
that it is stationary with respect to the symplectic measure is, after
combining Proposition~\ref{proptsmcmc-convergence} and~\cite{Tierney1994uk}, Proposition~3, enough to imply that \textsc
{Permutation-Toric-Symplectic-MCMC}$(\beta, \delta)$ is (strongly)
uniformly ergodic.

\begin{proposition}\label{propptsmcmcergodicity}
Let $\widehat{\mathcal{P}}$ be the transition kernel for
\textup{PTSMCMC}$(\beta,\delta)$ with $0 < \beta< 1$ and $\delta< 1$ and let
$\nu$ be the symplectic measure on $\widehat{\operatorname
{Pol}}_3(n;\vec{1})$. Then there exist
constants $R < \infty$ and $\rho< 1$ so that for any $(\vec{p},
\vec{\theta} ) \in\operatorname{int}(P) \times\break  T^{n-3}$,
\[
\bigl\llvert \alpha_\star\widehat{\mathcal{P}}^m(\vec{p},
\vec{\theta },\cdot) - \nu \bigr\rrvert _{\mathrm{TV}} < R \rho^m.
\]
\end{proposition}

Just as in Proposition~\ref{proptsmcmc-CLT}, since PTSMCMC$(\beta
,\delta)$ is uniformly ergodic and reversible with respect to
symplectic measure, it satisfies a central limit theorem.

\begin{proposition}\label{propptmcmcCLT}
Suppose $f\dvtx\widehat{\operatorname{Pol}}_3(n;\vec{1})\to\mathbb
{R}$ is square-integrable. For any run $R$ of
\textup{PTSMCMC}$(\beta,\delta)$, let $\operatorname{SMean}(f;R,m)$ be the
sample mean
of the value of $f$ over the first $m$ steps of $R$. Then there exists
a real number $\sigma(f)$ so that
\[
\sqrt{m}\bigl(\operatorname{SMean}(f;R,m) - E(f;\nu)\bigr) \stackrel {w} {
\longrightarrow} \mathcal{N}\bigl(0,\sigma(f)^2\bigr).
\]
\end{proposition}

The rest of the machinery of Section~\ref{subsecgeyerips}, including
the initial positive sequence estimator for $\sigma(f)^2$, also
applies. As a consequence, we get a modified \textit{Toric Symplectic
Markov Chain Monte Carlo integration} procedure adapted to unconfined,\vspace*{1pt}
equilateral polygons.
Note that the full symmetric group $S_n$ does not act on $\widehat
{\operatorname{Pol}}_3(n;\vec{r})$ when
not all $r_i$ are equal, so PTSMCMC$(\beta, \delta)$ cannot be used
to sample nonequilateral polygons. Reordering the edges of a polygon in
$\widehat{\operatorname{Pol}}_3(n;\vec{r})$ by $\sigma\in S_n$
would\vspace*{1pt} still\vspace*{1pt} yield a closed polygon, but the
new polygon would belong to a different space: $\widehat{\operatorname{Pol}
}_3(n;\sigma\cdot\vec{r})$. However, when many edgelengths are
equal, a subgroup\vspace*{1pt} of the symmetric group which permutes only those
edges certainly acts on $\widehat{\operatorname{Pol}}_3(n;\vec{r})$.
We recommend making use of this
smaller set of permutations when possible. Permuting edges never
preserves spherical confinement, so PTSMCMC$(\beta,\delta)$ is
inapplicable to confined polygon sampling.

Having defined PTSMCMC$(\beta,\delta)$ and settled on a canonical
starting point (the folded, permuted regular $n$-gon) and triangulation
(the spiral), it remains to decide on the best values of $\beta$ and
$\delta$. The question is complicated by the fact that the three
different types of steps---permutations, folding steps and
moment-polytope hit-and-run steps---take different amounts of CPU
time. To attempt to evaluate the various possibilities fairly, we ran
experiments computing the expected total curvature for 64-gons where
each experiment ran for $10$ minutes of CPU time, completing between
$2$ million and $15$ million steps depending on the mixture of step
types. We measured the $95\%$ confidence IPS error bars for each run,
producing the data in Figure~\ref{figspiralgrids}, and used the size
of this error bar as a measure of convergence.

\begin{figure}[t]

\includegraphics{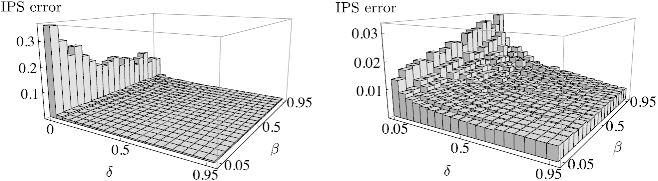}

\caption{This plot shows the IPS error estimator for the average total
curvature of unconfined equilateral 64-gons. The IPS error was computed
for 10-minute runs of the \textup{PTSMCMC}($\beta,\delta$) Markov chain
algorithm. The values of $\delta$ (the fraction of permutations among
all steps) ranged from $0$ to $0.95$ in steps of $0.05$ in the figure
on the left, and from $0.05$ to $0.95$ in the figure on the right. In
both plots, the values of $\beta$ (the fraction of moment polytope
steps among nonpermutation steps) ranged from $0.05$ to $0.95$ in steps
of $0.05$. When $\delta= 0$, this is just the \textup{TSMCMC}($\beta$) chain;
these are the comparatively very large error estimates in the back row
of the left figure. Removing those runs yields the plot on the right.
We observed that convergence was very sensitive to $\delta$, with
error bars improving dramatically as soon as the fraction of
permutation steps becomes positive: even the worst \textup{PTSMCMC}($\beta,\delta$) run with $\delta> 0$ had error bars 3 times smaller than
the error bars of the best \textup{TSMCMC}($\beta$) run. From the view at
right, we can see that the error bars continue to improve more modestly
as $\delta$ increases. Varying $\beta$ has little effect on the error
estimate when $\delta$ is large.}
\label{figspiralgrids}
\end{figure}

The data in Figure~\ref{figspiralgrids} show that the fraction
$\delta$ of permutation steps is the most important factor in
determining the rate of convergence in the PTSMCMC($\beta,\delta$)
algorithm. This shows that the extra complication in defining
PTSMCMC($\beta,\delta$) for unconfined equilateral polygons is
worth it: the error bars produced by PTSMCMC($\beta,\delta$) to
compute the expected total curvature of unconfined equilateral
$64$-gons are anywhere from 3 to 30 times smaller than the error bars
for TSMCMC($\beta$).

Larger values of $\beta$ produce smaller error bars when $\delta= 0$,
meaning that a large fraction of moment polytope steps are needed to
produce mixing when there are no permutation steps. On the other hand,
as we can see in Figure~\ref{figdelta05normalizederror}, even when
$\delta= 0.05$ the permutation steps provide enough mixing that $\beta
$ has virtually no effect on the IPS standard deviation estimator. In
this case, the effect of $\beta$ on the size of the error bars is due
to the fact that dihedral steps are faster than moment polytope steps,
so runs with small $\beta$ produce more samples, and hence smaller
error bars.

\begin{figure}[b]

\includegraphics{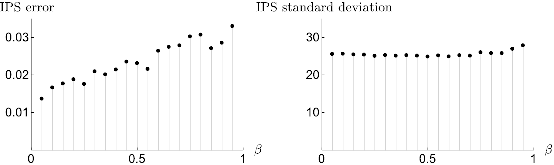}

\caption{These plots show the IPS error estimator and the IPS standard
deviation estimator for the average total curvature of unconfined
equilateral 64-gons using 10-minute runs of \textup{PTSMCMC}($\beta$, $0.05$).
Although the IPS error estimate decreases as $\beta$ decreases, the
plot on the right demonstrates that the IPS standard deviation
estimator is essentially constant---and presumably close to the true
standard deviation of the total curvature function---across the
different values of $\beta$. Since the IPS error estimate is
proportional to the standard deviation estimate divided by the square
root of the number of samples, we can see that the variation in IPS
error bars for these runs is almost entirely due to the difference in
the number of samples.}
\label{figdelta05normalizederror}
\end{figure}

Once $\delta$ is large, varying $\beta$ seems to have little effect
on the convergence rate. In fact, though our theory above no longer
proves convergence, we seem to get a very competitive algorithm by
removing moment polytope steps altogether ($\beta= 0$) and performing
only permutations and dihedral steps. This algorithm corresponds to the
``fold or crankshaft with permutations mixed in'' method.

In practice, we make a preliminary recommendation of $\delta=0.9$ and
$\beta=0.5$ for experimental work. These parameters guarantee
convergence (by our work above) while optimizing the convergence rate.
Using these recommended parameters, a 10-minute run of
PTSMCMC($0.5,0.9$) for unconfined, equilateral 64-gons produced just
under $7$ million samples and an expected total curvature of $101.7276
\pm0.0044$, which compares quite favorably to the actual expected
total curvature of $101.7278$.

We observed that the absolute error in our computations of expected
total curvature was less than our error estimate in $361$ of $380$ runs
($95\%$), which is exactly what we would expect from a $95\%$
confidence value estimator. We take this as solid evidence that the
Markov chain is converging and the error estimators are working as expected.


\subsection{\texorpdfstring{Calculations on confined polygons.}{Calculations on confined polygons}} 
\label{subcalculationsonconfinedpolygons}
Recall from~Definition~\ref{defconfinedfantriangulationpolytope}
that a polygon is in spherical confinement in a sphere of radius $R$
centered at vertex $v_1$ of the polygon if the vector $\vec{d}$ of fan
diagonals of the polygon lies in the confined fan polytope
$P_{n,R}(\vec{r})$. This means that we can sample such polygons
uniformly by restricting the hit-and-run steps in TSMCMC($\beta$) to
the confined fan polytope $P_{n,R}(\vec{r})$.

We again only explored the situation for equilateral polygons of edgelength one. After some experimentation, we settled on the ``folded
triangle'' as a start point. This polygon is constructed by setting
each diagonal length $d_i$ to one and choosing dihedrals randomly. This
polygon is contained in spherical confinement for every $R \geq1$, so
we could use it for all of our experiments. We investigated 23-gons
confined to spheres of radius $2$, $4$, $6$, $8$, $10$ and $12$,
measuring the Geyer IPS error estimate for values of $\beta$ selected
from $0.05$ (almost all dihedral steps) to $0.95$ (almost all moment
polytope steps) over 10-minute runs. Again, since dihedral steps are
faster to run than moment polytope steps, the step counts varied over
the course of the experiments. For instance, in the radius 2
experiments, we observed step counts as high as 35 million and as low
as 7 million over runs with various $\beta$ values. Our integrand was
again total curvature. Since we do not have an exact solution for the
expected total curvature of a confined $n$-gon, we were unable to check
whether the error bars predicted actual errors. However, it was
comforting to note that the answers we got from runs with various
parameters were very consistent. We ran each experiment 50 times to get
a sense of the repeatability of the Geyer IPS error bar; the results
are shown in Figure~\ref{figconfinedgrids}.

\begin{figure}[t]

\includegraphics{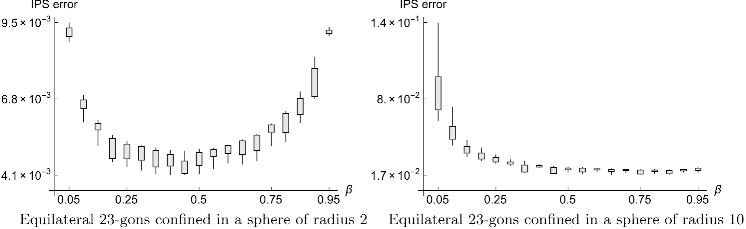}

\caption{These box-and-whisker plots show the results of computing the
expected total curvature for confined equilateral $23$-gons with
edgelength $1$. The confinement model here is ``rooted spherical
confinement,'' meaning that each vertex is within a specified distance
of the first vertex. For each $\beta$ value, we repeated $10$-minute
experiments $50$ times, computing $50$ values for the Geyer IPS
estimator. The boxes show the second and third quartiles of these $50$
data points, while the whiskers show the $0.05$ to $0.95$ quantiles of
the IPS estimators observed.}\vspace*{-6pt}
\label{figconfinedgrids}
\end{figure}

We observed first that there is a clear trend in the error bar data.
For the tightly confined runs, there was a noticeable preference for
$\beta\sim0.5$, while in less tight confinement the results generally
continued to improve modestly as $\beta$ increased. Still, we think
the data supports a general recommendation of $\beta= 0.5$ for future
confined experiments, with a possible decrease to $\beta= 0.4$ in very
tight confinement, and this is our recommendation to future investigators.

A very striking observation from Figure~\ref{figconfinedgrids} is
that the error bars for the tightly confined $23$-gons in a sphere of
radius $2$ are about 10 times smaller than the error bars for the very
loosely confined $23$-gons in a sphere of radius $10$. That is, our
algorithm works better when the polygon is in tighter confinement. In
some sense, this is to be expected, since the space being sampled is
smaller. However, it flies in the face of the natural intuition that
confined sampling should be numerically more difficult than unconfined sampling.

Using TSMCMC($0.5$), we computed the expected total curvature of
tightly confined equilateral 50- and 90-gons. Those expectations are
shown in Table~\ref{tabconfinedtotalcurvature}. We can compare
these data directly by looking at expected turning angles as in
Figure~\ref{figconfinedturningangle}. In this very tight
confinement regime, the effect of confinement radius on expected
turning angle dominates the effect of the number of edges.

\begin{table}[t]
\tabcolsep=18pt
\caption{This table shows the expected total curvature of equilateral
50- and 90-gons in rooted spherical confinement. We sampled equilateral
50- and 90-gons in confinement radii from $1.1$ to $1.6$ using
20-minute runs of \textup{TSMCMC}($0.5$) and computed the average total
curvature and IPS error bars for each run. Each 50-gon run yielded
about 14.5 million samples, while each 90-gon run yielded about 8
million samples. The bottom line shows the exact expectation of total
curvature for unconfined polygons given by Theorem~\protect\ref{thmexpectedtotalcurvature}. More extensive information on expectations of
confined total curvatures has been computed by Diao, Ernst, Montemayor
and Ziegler~\protect\cite{clauspc}}
\label{tabconfinedtotalcurvature}
\begin{tabular*}{\tablewidth}{@{\extracolsep{\fill}}lcc@{}}
\hline
& \multicolumn{2}{c@{}}{\textbf{Expected total curvature of
tightly-confined}}
\\
\multicolumn{1}{@{}l}{\multirow{2}{50pt}[-7pt]{\textbf{Confinement radius}}}& \multicolumn{2}{c@{}}{\textbf{equilateral 50- and
90-gons}}    \\[-4pt]
& \multicolumn{2}{l@{}}{\hrulefill}\\
 & \textbf{50-gons} & \textbf{90-gons} \\
\hline
$1.1$ & $103.1120 \pm0.0093$ & $185.701 \pm0.028$ \\
$1.2$ & $100.1900 \pm0.0089$ & $180.261 \pm0.028$ \\
$1.3$ & $\phantom{1}97.8369 \pm0.0088$ & $175.947 \pm0.028$ \\
$1.4$ & $\phantom{1}95.8891 \pm0.0090$ & $172.346 \pm0.027$ \\
$1.5$ & $\phantom{1}94.1979 \pm0.0091$ & $169.271 \pm0.028$ \\
$1.6$ & $\phantom{1}92.7501 \pm0.0094$ & $166.660 \pm0.029$ \\
$\infty$ & $\phantom{1}79.74197470$\phantom{0000} & $142.5630093$\phantom{000}\\
\hline
\end{tabular*}
\end{table}

\begin{figure}[b]

\includegraphics{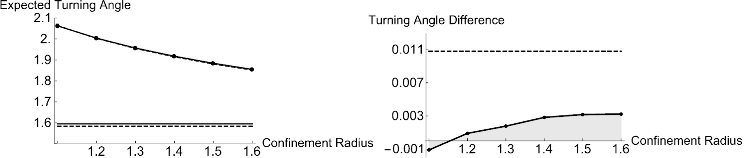}

\caption{The plot on the left shows the expected turning angles of
equilateral 50-gons (solid) and equilateral 90-gons (dashed) in rooted
spherical confinement of radii from $1.1$ to $1.6$. The horizontal
lines show the expected turning angles for unconfined 50- and 90-gons
computed using Theorem~\protect\ref{thmexpectedtotalcurvature},
which are
${\simeq}1.59484$ and ${\simeq}1.58403$, respectively. The plot on the
right shows the differences between the expected turning angles of
equilateral 50-gons and the expected turning angles of equilateral
90-gons. The black dots show this difference for various confinement
radii, while the dashed line shows the corresponding difference for
unconfined polygons. Without confinement, we expect polygons with more
edges to have smaller expected turning angle, since each individual
edge feels less pressure to get back to the starting point. These data
provide evidence this effect dissipates and even reverses in extremely
tight confinement.}
\label{figconfinedturningangle}
\end{figure}

\section{\texorpdfstring{Comparison with existing work, conclusion and future directions.}{Comparison with existing work, conclusion and future directions}}

Now that we have laid out the symplectic theory of random walks and a
few of its consequences, it is time to look back and see how we can
reconcile this point of view with the existing understanding of closed
random walks. In the methods of Moore and Grosberg~\cite{Moore2005fh}
and Diao et al.~\cite{Diao2011ie}, closed random walks are generated
incrementally,\vadjust{\goodbreak} using distributions derived from
the p.d.f. $\Phi_n(\vec{\ell} )$ given in~(\ref{eqftcpdfv2}) for
the end-to-end distance of a random walk of $n$ steps. To review, the
key idea is that if we have taken $m-1$ steps of an $n$-step closed
walk and arrived at the $m$th vertex~$\vec{v}_m$, the p.d.f. of the next
vertex $\vec{v}_{m+1}$ (conditioned on the steps we have already
taken) is given by
\[
P(\vec{v}_{m+1}|\vec{v}_1, \ldots, \vec{v}_m) =
\frac{\Phi
_{1}(\vec{v}_{m+1} - \vec{v}_m) \Phi_{n-m-1}(\vec{v}_{m+1} - \vec
{v}_1)}{\Phi_{n-m}(\vec{v}_m - \vec{v}_1)},
\]
which is some complicated product of piecewise-polynomial $\Phi_k(\vec
{\ell} )$ functions. We can sample $\vec{v}_{m+1}$ from this
distribution, and hence generate the rest of the walk iteratively.

From the moment polytope point of view, the situation is considerably
simpler. First, we observe that everything in the equation above can be
expressed in terms of diagonal lengths in the fan triangulation
polytope, since the length of the vector $\vec{\ell}$ is the only
thing that matters in the formula for $\Phi_k(\vec{\ell})$. If we
let $\vec{v}_1 = \vec{0}$ by convention, then conditioning on $\vec
{v}_1, \ldots, \vec{v}_m$ is simply restricting our attention to the
slice of the moment polytope given by setting the diagonal lengths $d_1
= |\vec{v}_3|, d_2 = |\vec{v}_4|, \ldots, d_{m-2} = |\vec{v}_m|$.
The p.d.f. $P(\vec{v}_{m+1}|\vec{v}_1, \ldots, \vec{v}_m)$ is then the
projection of the measure on this slice of the moment polytope to the
coordinate $d_{m-1}$. This distribution is piecewise-polynomial
precisely because it is the projection of Lebesgue measure on a convex
polytope with a finite number of faces.

Of course, projecting volume measures of successive slices to
successive coordinates is a perfectly legitimate way to sample a convex
polytope, which is another explanation for why these methods work; they
are basically sampling successive marginals of the coordinate
distributions on a succession of smaller convex polytopes. By contrast,
our method generates the entire vector of diagonal lengths $d_1, \ldots
, d_{n-3}$ simultaneously according to their joint distribution by
sampling the moment polytope directly. More importantly, it offers a
geometric insight into what this joint distribution \emph{is} which
seems like it would be very hard to develop by analyzing~(\ref{eqftcpdfv2}).

In conclusion, the moment polytope picture offers a clarifying and
useful perspective on closed and confined random walks. It is clear
that we have only scratched the surface of this topic in this paper,
and that many fascinating questions remain to be explored both
theoretically and computationally. In the interest of continuing the
conversation, we provide an unordered list of open questions suggested
by the work above.
\begin{itemize}
\item Previous studies of the relative efficiency of polygon sampling
algorithms have focused on minimizing pairwise correlations between
edges as a measure of performance. Proposition~\ref{proppolygonsampling} suggests a more subtle  approach to evaluating
sample quality: measure the uniformity of the distribution of diagonal
lengths over the moment polytope and of dihedral angles over the torus
(cf.~\cite{Mardia2000um}).
\item It remains open to try to extend these methods to prove that a
chain consisting only of permutation and dihedral steps is still
strongly geometrically convergent on unconfined equilateral polygon
space. This would lead directly to a proof of convergence for the
crankshaft and fold algorithms, and hence place many years of sampling
experiments using these methods on a solid theoretical foundation.
\item Can we use the moment polytope pictures above for confined
polygons to prove theorems about polygons in confinement? For instance,
it would be very interesting to show that the expectation of total
curvature is monotonic in the radius of confinement.
\item What is the corresponding picture for random planar polygons? Of
course, we can see the planar polygons as a special slice of the
action-angle coordinates where the angles are all zero or $\pi$. But
is it true that sampling this slice according to Hausdorff measure in
action-angle space corresponds to sampling planar polygons according to
their Hausdorff measure inside space polygons?\footnote{These questions
are less obvious than they may appear at first glance: the cylindrical
coordinates $\theta$ and $z$ are action-angle coordinates on the
sphere, but it is not the case that the arclength measure on a curve in
the $\theta$--$z$ cylinder pushes forward to the arclength measure on
the image of the curve on the sphere, even though the area measure on
the $\theta$--$z$ cylinder does push forward to the standard area
measure on the sphere.} If not, can we correct the measure somehow? Or
is there another picture for planar polygons entirely?
\item Can we understand the triangulation polytopes better? Can we
compute their centers of mass explicitly, for example? It is well known
that finding the center of mass of a high-dimensional polytope is
algorithmically difficult, so we cannot hope for a purely mechanical
solution to the problem. But a deeper understanding of these polytopes
seems likely to result in interesting probability theorems.
\item Why are permutation steps so effective in the PTSMCMC Markov
chain? It seems easy to compute that the number of points in the
permutation group orbit of an $n$-edge polygon is growing much faster
than the volume of equilateral polygon space computed by~\cite
{Takakura2001ir,Khoi2005ch,Mandini2008wq} and given above as
Corollary~\ref{corequilateralvolume}. Can we prove that the points
in this orbit are usually well distributed over polygon space? This
would give an appealing proof of the effectiveness of Grosberg's
triangle method for polygon sampling~\cite
{Moore2004ds,Moore2004tw,Lua2004wa}.
\item There is a large theory of ``low-discrepancy'' or
``quasi-random'' sequences on the torus which can provide better
results in numerical integration than uniform random sampling. Would it
be helpful to choose our dihedrals from such a sequence in the
integration method above?
\item Now that we can sample confined polygons quickly, with solid
error bars on our calculations, what frontiers does this open in the
numerical study of confined polymers? We take our cues from the
pioneering work of Diao, Ernst, Montemayor and Ziegler \cite
{Diao2011ie,Diaowt,Diao2012dza,diaopreprint}, but are eager to
explore this new experimental domain. For instance, sampling tightly
confined $n$-gons might be a useful form of ``enriched sampling'' in
the hunt for complicated knots of low equilateral stick number, since
very entangled polygons are likely to be geometrically compact as well.
\end{itemize}
We introduced a related probabilistic theory of nonfixed edgelength
closed polygons in a previous paper~\cite{CPACPA21480} by relating
closed polygons with given total length to Grassmann manifolds. It
remains to explain the connection between that picture and this one,
and we will take up that question shortly.

\begin{appendix}
\section{\texorpdfstring{Expected total curvature of equilateral closed polygons for
small $n$}{Expected total curvature of equilateral closed polygons for
small $n$}}
\label{appexpectedtotalcurvaturevalues}

In Section~\ref{subsectotalcurvatureofclosedpolygons}, we found
an exact integral formula for the expectation of total curvature for
equilateral $n$-gons following the approach of Grosberg~\cite
{GrosbergExtra}. Grosberg analyzed the asymptotics of this formula for
large numbers of edges,
showing that the expected total curvature approaches the asymptotic
value $n\frac{\pi}{2} + \frac{3\pi}{8}$. We are interested in
evaluating the formula exactly for small $n$ in order to provide a
check on our numerical methods. We used \textit{Mathematica} to evaluate
the formula, obtaining the fractional expressions shown in Table~\ref{tabexpectedtotalcurvature}. Grosberg's asymptotic value is shown in
the rightmost column.

Though for space reasons it had to be truncated in the table, the exact
value for the expected total curvature of equilateral, unconfined
64-gons is

{\fontsize{5.5pt}{7.5pt}\selectfont{\begin{eqnarray*}
&&\!\!\!\!\frac
{4{,}522{,}188{,}530{,}226{,}656{,}504{,}649{,}836{,}292{,}227{,}453{,}294{,}126{,}904{,}427{,}946{,}053{,}625{,}769{,}754{,}177{,}967{,}556{,}412{,}769{,}571{,}113{,}455}
{139{,}655{,}807{,}027{,}685{,}559{,}939{,}231{,}323{,}004{,}419{,}270{,}090{,}691{,}937{,}881{,}733{,}899{,}567{,}960{,}159{,}577{,}537{,}880{,}384{,}373{,}522{,}432}\pi
\\
&&\!\!\!\!\qquad{}+\frac
{288{,}230{,}376{,}151{,}711{,}744}{491{,}901{,}992{,}474{,}628{,}194{,}486{,}464{,}288{,}049{,}342{,}660{,}789{,}103{,}293{,}530{,}486{,}293{,}575{,}717{,}158{,}971{,}541{,}638{,}355{,}891{,}307}.
\end{eqnarray*}}}

\begin{table}
\tabcolsep=0pt
\caption{The expected total curvature of equilateral $n$-gons computed
by evaluating (\protect\ref{eqtotalcurvature}) in \textit{Mathematica} for
$4 \leq n \leq20$ and $n=32, 64$ (the integral becomes singular when
$n=3$, but all triangles have total curvature $2\pi$), together with
Grosberg's asymptotic approximation. We see that for 64-gons we need 5
significant digits to distinguish the exact value from the asymptotic
approximation}
\label{tabexpectedtotalcurvature}
{\fontsize{8pt}{10pt}\selectfont{\begin{tabular*}{\tablewidth}{@{\extracolsep{\fill}}lld{3.5}d{3.5}@{}}
\hline
$\bolds{n}$ & \textbf{Expected total curvature} & \multicolumn
{1}{c}{\textbf{Decimal}} & \multicolumn{1}{c@{}}{\textbf
{Asymptotic}} \\
\hline
\phantom{0}3 & $2\pi$ & 6.28319 & 5.89049 \\[3pt]
\phantom{0}4 & 8 & 8 & 7.46128 \\[3pt]
\phantom{0}5 & $-2 \pi+9 \sqrt{3}$ & 9.30527 & 9.03208 \\[3pt]
\phantom{0}6 & $6 \pi-8$ & 10.8496 & 10.6029 \\[3pt]
\phantom{0}7 & $\frac{316}{33}\pi-\frac{225 }{22} \sqrt{3} $ &
12.369 &
12.1737 \\[4pt]
\phantom{0}8 & $\frac{15}{4} \pi+\frac{32}{15}$ & 13.9143 & 13.7445
\\[4pt]
\phantom{0}9 & $\frac{766}{289} \pi+\frac{11{,}907 }{2890} \sqrt{3}$
& 15.463 &
15.3153 \\[4pt]
10 & $\frac{11}{2} \pi-\frac{64}{245}$ & 17.0175 & 16.8861 \\[4pt]
11 & $\frac{90{,}712}{14{,}219} \pi-\frac{1{,}686{,}177 }{1{,}990{,}660} \sqrt{3}$ &
18.5751 & 18.4569 \\[4pt]
12 & $\frac{331{,}545}{51{,}776} \pi+\frac{512}{28{,}315}$ & 20.1351 &
20.0277 \\[4pt]
13 & $\frac{23{,}336{,}570}{3{,}407{,}523} \pi+\frac{2{,}381{,}643}{22{,}716{,}820} \sqrt
{3}$ & 21.6969 &
21.5984 \\[4pt]
14 & $\frac{877{,}129}{118{,}464} \pi-\frac{1024}{1{,}282{,}743}$ & 23.2601 &
23.1692 \\[4pt]
15 & $\frac{3{,}189{,}814{,}022}{403{,}436{,}289} \pi-\frac
{1{,}786{,}291{,}299}{207{,}097{,}295{,}020} \sqrt{3}$ &
24.8244 & 24.74 \\[4pt]
16 & $\frac{241{,}091{,}487}{28{,}701{,}184} \pi+\frac{4096}{168{,}339{,}171}$ &
26.3896 & 26.3108 \\[4pt]
17 & $\frac{197{,}198{,}281{,}266}{22{,}161{,}558{,}721} \pi+\frac
{44{,}753{,}178{,}051}{88{,}734{,}881{,}118{,}884} \sqrt{3}$
& 27.9554 & 27.8816 \\[4pt]
18 & $\frac{42{,}415{,}625{,}107}{4{,}513{,}689{,}728} \pi-\frac{8192}{15{,}127{,}913{,}229}$ &
29.5219 & 29.4524
\\[4pt]
19 & $\frac{240{,}270{,}145{,}231{,}776
}{24{,}279{,}795{,}663{,}511} \pi-\frac{4{,}277{,}229{,}018{,}201}{194{,}432{,}603{,}673{,}396{,}088} \sqrt
{3}$ & 31.0888 & 31.0232 \\[4pt]
20 & $\frac{111{,}226{,}176{,}353{,}241}{10{,}700{,}200{,}165{,}376} \pi+\frac
{131{,}072}{14{,}288{,}920{,}862{,}931}$ &
32.6561 & 32.594 \\[4pt]
32 & $\frac
{262{,}929{,}167{,}708{,}231{,}675{,}164{,}189{,}486{,}733}{16{,}044{,}875{,}932{,}324{,}628{,}104{,}050{,}900{,}992} \pi
+\frac{134{,}217{,}728}{46{,}358{,}282{,}926{,}117{,}706{,}045{,}930{,}790{,}075}$ & 51.4816 & 51.4436
\\[4pt]
64 & ${\simeq}\frac{4.52218853 \times10^{84}}{1.39655807 \times
10^{83}} \pi+ \frac{2.88230376 \times10^{17}}{4.91901992 \times
10^{80}}$ & 101.7278 & 101.7091\\
\hline
\end{tabular*}}}
\end{table}

\section{\texorpdfstring{Proof of Proposition~\protect\ref{666666666666}}{Proof of Proposition~6}}
\label{sechalfSpaceArms}
In this section, we prove Proposition~\ref{666666666666},
which we restate here.

\begin{proposition}\label{pr34}
The\vspace*{-2pt} polytope
\[
\mathcal{H}_n = \bigl\{\vec{z} \in[-1,1]^n |
z_1 \geq0, z_1 + z_2 \geq 0, \ldots,
z_1 + \cdots+ z_n \geq0, -1 \leq z_i \leq1
\bigr\}
\]
has volume\vspace*{-4pt} $\frac{1}{2^n} {2n\choose n} = \frac{(2n-1)!!}{n!}$.
\end{proposition}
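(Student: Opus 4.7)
The plan is to interpret $\Vol(\mathcal{H}_n)/2^n$ probabilistically and apply the classical Sparre-Andersen theorem. Renormalizing Lebesgue measure on the cube $[-1,1]^n$ by a factor $2^n$ turns the coordinates $z_i$ into independent uniform variates $Z_i$ on $[-1,1]$. Writing $S_k = Z_1 + \dots + Z_k$, the defining inequalities of $\mathcal{H}_n$ become the events $\{S_k \geq 0\}$. Since the distribution of $Z_i$ is continuous and symmetric about $0$, each event $\{S_k = 0\}$ has probability zero, so
\[
\Vol(\mathcal{H}_n)/2^n \;=\; P(S_1 > 0,\, \dots,\, S_n > 0).
\]

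The key tool is Spitzer's identity: for any i.i.d.\ sequence with partial sums $S_k$,
\[
\sum_{n \geq 0} P(S_1 > 0, \dots, S_n > 0)\, x^n \;=\; \exp\!\left(\sum_{k \geq 1} \frac{x^k}{k}\, P(S_k > 0)\right).
\]
When the step distribution is symmetric and continuous, $P(S_k > 0) = \tfrac{1}{2}$ for every $k \geq 1$, so the right-hand side reduces to $\exp(-\tfrac{1}{2}\log(1-x)) = (1-x)^{-1/2}$. Extracting the coefficient of $x^n$ in the Taylor series $(1-x)^{-1/2} = \sum_{n \geq 0} \binom{2n}{n} x^n/4^n$ yields
\[
P(S_1 > 0, \dots, S_n > 0) \;=\; \frac{1}{4^n}\binom{2n}{n}.
\]
Multiplying by $2^n$ gives the claimed $\Vol(\mathcal{H}_n) = \binom{2n}{n}/2^n$, and the equivalence with $(2n-1)!!/n!$ follows from the standard identity $\binom{2n}{n} = 2^n (2n-1)!!/n!$.

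\textbf{Main obstacle.} Spitzer's identity is classical but nontrivial, and a fully self-contained appendix should either cite it (e.g.\ in Feller's textbook) or rederive it via a cyclic-shift/factorization argument in the spirit of Sparre Andersen's original paper. A more elementary but less elegant route specialized to this problem would be direct induction on $n$ using the convolution recursion $f_n(s) = \tfrac{1}{2}\int_{\max(0,s-1)}^{s+1} f_{n-1}(t)\,dt$ for the density of $S_n$ restricted to nonnegative trajectories, followed by verification that the resulting integral evaluates to $\binom{2n}{n}/4^n$; however, the generating-function route through Spitzer gives by far the cleanest ``pleasant combinatorial argument.''
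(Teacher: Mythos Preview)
Your argument is correct: the probabilistic reinterpretation is valid (continuity of the step law makes the $\geq$/$>$ distinction immaterial), Spitzer's identity applies, and the coefficient extraction from $(1-x)^{-1/2}$ is standard.

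The paper takes a different but closely related route. Rather than invoking Spitzer or Sparre Andersen, it decomposes the cube $[-1,1]^n$ according to the index $k$ at which the minimum partial sum $s_k(\vec{z})$ is attained, and exhibits an explicit volume-preserving linear map showing that the piece where the minimum occurs at index $k$ has volume $\vol\mathcal{H}_k\cdot\vol\mathcal{H}_{n-k}$. This yields the convolution recursion
\[
2^n = 2\vol\mathcal{H}_n + \sum_{k=1}^{n-1}\vol\mathcal{H}_k\,\vol\mathcal{H}_{n-k},
\]
which is then solved by induction using the Chu--Vandermonde identity. In effect the paper is re-deriving, in self-contained polytope language, the very combinatorial core (the ``location of the minimum'' factorization) that underlies Sparre Andersen's theorem; dividing the recursion by $4^n$ gives exactly the generating-function identity $\bigl(\sum_n p_n x^n\bigr)^2 = (1-x)^{-1}$ that your Spitzer argument produces. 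So the two proofs meet in the middle. Your version is shorter and situates the result in the random-walk literature; the paper's version is elementary and self-contained, avoiding the need to cite or rederive Spitzer, which is precisely the obstacle you flagged.
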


Our proof is a modification of an argument originally suggested on
MathOverflow by Johan W\"astlund~\cite{mo}; Bernardi, Duplantier and
Nadeau~\cite{Bernardi2010ws} seem to have had something similar in\vspace*{-1pt} mind.

\begin{pf*}{Proof of Proposition \ref{pr34}}
Suppose that $s_k(\vec{z}) = z_1 + \cdots+ z_k$ is the $k$th partial
sum of the coordinates of $\vec{z}$, and by convention we set
$s_0(\vec{z}) = 0$. The polytope $\mathcal{H}_n$ can be defined as
the subset of the hypercube where all $s_k(\vec{z}) \geq0$. In the
remainder of the hypercube, the subset of $\vec{z}$ where all the
$s_k(\vec{z})$ are different has full measure: we now partition this
set into a collection of $n$ polytopes $\mathcal{S}_0, \ldots,
\mathcal{S}_{n}$ defined\vspace*{-2pt} by
\[
\mathcal{S}_k := \bigl\{ \vec{z} \in[-1,1]^n -
\mathcal{H}_n|\mbox{the smallest $s_i(\vec{z})$
is $s_k(\vec{z})$}\bigr\}.
\]
We claim that $\operatorname{Vol}\mathcal{S}_k = \operatorname
{Vol}\mathcal{H}_k \cdot\operatorname{Vol}
\mathcal{H}_{n-k}$ for  all $k=1, \ldots, n-1$ and that $\operatorname
{Vol}\mathcal{S}_n =
\operatorname{Vol}\mathcal{H}_n$. Consider the linear\vspace*{-2pt} map
\begin{eqnarray*}
 L_k \dvtx\mathcal{S}_k  &\subset & \mathbb{R}^n
\rightarrow\mathbb {R}^{k} \times\mathbb{R} ^{n-k},
\\[-2pt]
 L_k(z_1,\ldots,z_n) &=& \bigl((-z_k,-z_{k-1},
\ldots, -z_1),(z_{k+1}, \ldots, z_n)\bigr).
\end{eqnarray*}
It is clear that this map preserves unsigned volume. We claim the image
is exactly $\mathcal{H}_k \times\mathcal{H}_{n-k}$. Consider the
partial sums of $(-z_k,\ldots,-z_1)$. The $i$th partial sum is given\vspace*{-2pt} by
\[
s_i(-z_k,\ldots,-z_1) = -z_k -
z_{k-1} - \cdots- z_{k-i+1} = s_{k-i}(z_1,
\ldots,z_n) - s_k(z_1,\ldots,z_n).
\]
The point $(-z_k,\ldots,-z_1)$ is in $\mathcal{H}_k$ $\iff$ this
partial sum is positive for all ${i \in\{1, \ldots, k\}}$. But that
happens exactly when $s_k(\vec{z})$ is negative\footnote{Remember our
convention that $s_0(\vec{z}) = 0$, which is applied when $i=k$.} and
the smallest partial sum among $s_1(\vec{z}), \ldots, s_k(\vec{z})$.
On the other hand, if we consider the partial sums of $(z_{k+1},\ldots
,z_n)$,\vspace*{-3pt} we get
\[
s_i(z_{k+1},\ldots,z_n) = z_{k+1} +
\cdots+ z_{k+i} = s_{k+i}(z_1,
\ldots,z_n) - s_k(z_1,\ldots,z_n).
\]
The point $(z_{k+1}, \ldots, z_n)$ is in $\mathcal{H}_{n-k}$ if and
only if this partial sum is positive for all ${i \in\{1, \ldots, n-k\}
}$. But that happens exactly when $s_k(\vec{z})$ is the smallest
partial sum among $s_k(\vec{z}), \ldots, s_n(\vec{z})$, proving the
claim. When $k=n$, $\mathcal{S}_n$ is just a reversed and negated copy
of $\mathcal{H}_n$ itself.

We now\vspace*{-3pt} have the relation
%
\begin{eqnarray}
\operatorname{Vol}[-1,1]^n &=& 2^n =
\operatorname {Vol}\mathcal{H}_n + \sum\operatorname{Vol}
\mathcal{S}_k
\nonumber
\\[-10pt]
\label{eqHnformula}
\\[-10pt]
\nonumber
&= & 2 \operatorname{Vol}\mathcal{H}_n + \sum
_{k=1}^{n-1} \operatorname{Vol}
\mathcal{H}_k \operatorname{Vol}\mathcal{H}_{n-k}
\end{eqnarray}
and we can prove the formula\vadjust{\goodbreak} by induction on $n$.

When $n=1$, the polytope $\mathcal{H}_1 = [0,1]$ and so the formula
holds. For the inductive step, assume that $\operatorname{Vol}\mathcal
{H}_k = \frac
{1}{2^k} {2k \choose k}$ for all $k < n$. Then solving (\ref{eqHnformula}) for $\operatorname{Vol}\mathcal{H}_n$ yields
%
\begin{equation}
\label{eqinductivestep} \operatorname{Vol}\mathcal{H}_n = 2^{n-1} -
\frac{1}{2^{n+1}}\sum_{k=1}^{n-1} \pmatrix{ 2k \cr k} \pmatrix{2(n-k) \cr n-k}.
\end{equation}
Using the Chu--Vandermonde identity
\[
\sum_{k=0}^n \pmatrix{x \cr k} \pmatrix{y \cr
n-k} = \pmatrix{x+y \cr n}
\]
with $x=y=-\frac{1}{2}$ and recalling that
\[
\pmatrix{-\frac{1}{2} \cr m} = (-1)^m \pmatrix{2m \cr m}
\frac{1}{2^{2m}}  \quad\mbox{and} \quad \pmatrix{-1 \cr p} = (-1)^p
\]
for any positive integers $m$ and $p$, we see that
\[
\sum_{k=1}^{n-1} \pmatrix{2k \cr k} \pmatrix{2(n-k)
\cr n-k} = \sum_{k=0}^n \pmatrix{2k \cr k}
\pmatrix{2(n-k)\cr n-k} -2\pmatrix{2n \cr n} = 2^{2n} - 2\pmatrix{2n \cr n}.
\]
Therefore, equation (\ref{eqinductivestep}) simplifies to
\[
\operatorname{Vol}\mathcal{H}_n = \frac{1}{2^n} \pmatrix{2n \cr n},
\]
as desired.
\end{pf*}
\end{appendix}

\section*{\texorpdfstring{Acknowledgements.}{Acknowledgements}}
We are grateful to many more friends and colleagues for important
discussions related to this project than we can possibly remember to
name here. But to give it our best shot, Michael Usher taught us a
great deal of symplectic geometry, Malcolm Adams introduced us to the
Duistermaat--Heckman theorem, Margaret Symington provided valuable
insight on moment maps, and Alexander Y. Grosberg and Tetsuo Deguchi
have been constant sources of insight and questions on polygon spaces
in statistical physics. Yuanan Diao, Claus Ernst and Uta Ziegler
introduced us to the Rayleigh $\operatorname{sinc}$ integral form for
the p.d.f. of arm
length (and to a great deal more). We were inspired by their insightful
work on confined sampling to look at confinement models above. Ken
Millett and Eric Rawdon have graciously endured our various doubts
about the convergence of the crankshaft and fold algorithms for many
years, and were the source of many pivotal conversations. Chris Soteros
provided much appreciated expert guidance on Markov chain sampling.
Jorge Calvo, Kate Hake and Teresita Ramirez-Rosas read the draft
extremely carefully and made some helpful corrections. And we are
especially indebted to Alessia Mandini, Chris Manon, Angela Gibney and
Danny Krashen for explaining to us some of the elements of the
algebraic geometry of polygon spaces.

We are also deeply appreciative of the efforts of the editor, associate
editor and referees, who made excellent suggestions for improving this paper.

We were supported by the Georgia Topology Conference Grant
DMS-11-05699, which helped us organize a conference on polygon spaces
in the summer of 2013.
We are grateful to the Issac Newton Institute
for the Mathematical Sciences, Cambridge, for support and hospitality
during the program ``Topological Dynamics in the Physical and
Biological Sciences'' in Fall 2012, when much of this work was completed.






\printaddresses
\end{document}